\newtheorem{theorem}{Theorem}
\newtheorem{lemma}[theorem]{Lemma}
\newtheorem{proposition}[theorem]{Proposition}
\newcounter{theremark}
\newcommand{\1}{\mathbbm{1}}
\numberwithin{equation}{section}
\newcommand{\sumstar}{\sideset{}{^\star} \sum}
\renewcommand{\section}{\@startsection{section}{0}{0pt}{3.6ex plus 0.2ex minus 0.1ex}{2.3ex plus 0.1ex minus 0.1ex}{\center\normalfont\sc\large}}
\def\imod#1{\allowbreak\mkern5mu{\operator@font mod}\,\,#1}
\newcommand{\NN}{\mathbb{N}}
\newcommand{\ZZ}{\mathbb{Z}}
\newcommand{\smallabcd}{\ensuremath{\left(\begin{smallmatrix}a & b\\c& d\end{smallmatrix}\right)}}
\begin{document}

\title{Effective equidistribution of horocycle lifts }
\author{Ilya Vinogradov}
\address{Department of Mathematics, Princeton University, Princeton, NJ 08544, United States} 
\email{ivinogra@math.princeton.edu}
\date{\today}

\begin{abstract}
We give a rate of equidistribution of lifts of horocycles from the space $\SL(2,\Z)\quot \SL(2, \R)$ to the space $\ASL(2,\Z)\quot \ASL(2,\R)$, making effective a theorem of Elkies and McMullen.
This result constitutes an effective version of Ratner's measure classification theorem  for measures supported on general horocycle lifts.
The method used relies on Weil's resolution of  the Riemann hypothesis for function fields in one variable and generalizes the approach of Str\"ombergsson to the case of linear lifts and that of Browning and the author to rational quadratic lifts. 
\end{abstract}

\subjclass[2010]{37A17 (37A25, 11L03)}

\maketitle

\thispagestyle{empty}

%MSC subject codes: primary ; secondary 11K36, 11P21, 22E40, 37A17, 37A25. 

\section{Introduction}

\subsection{Background}

%%%%%%%%%%

In the theory of flows on homogeneous spaces, Ratner's theorems on measure rigidity, topological rigidity, and orbit equidistribution \cite{ratner_raghunathans_1991, ratner_raghunathans_1991_1} play a major role. Their applications go far beyond the realm of dynamical systems and include results in number theory and mathematical physics \cite{ElkiesMcM04, shah_equidistribution_2009, marklof_frobenius_2010,  marklof_strombergsson_free_path_length_2010}; thorough expositions and comprehensive references 
% for Ratner's theorems 
may be found in \cite{morris_ratner_2005}.

In the last decade there has been an increased interest in obtaining \emph{effective} versions of
Ratner's results, such as giving a rate of convergence of measures in the measure rigidity theorem.  There are
two general situations where effective results may be
proved: when the group generating the flow is horospherical, or when it is  ``large'' in an
appropriate sense (cf.~\cite[Sec.~1.5.2]{einsiedler_effective_2009}). 
Recently, rates of convergence were obtained for several settings where the corresponding group is neither horospherical nor large. Green and Tao \cite{green_tao_quantative_2012} proved effective
equidistribution of polynomial orbits on nilmanifolds, while Einsiedler, Margulis, and Venkatesh \cite{einsiedler_effective_2009}
proved effective equidistribution for closed orbits of semisimple groups on
general homogeneous
spaces. Str\"ombergsson \cite{strombergsson_effective_2013} and Browning and the author \cite{browning_vinogradov_2016} gave rates for the convergence of measures on special horocycle lifts; the present paper further explores this direction by giving a rate of convergence for measures on general horocycle lifts.

% 
% Recently, however,
% some new important cases have been established: Green and Tao [14] have proved effective
% equidistribution of polynomial orbits on nilmanifolds; this is an important input in their work
% on linear equations in primes [13], [15]. Moreover, Einsiedler, Margulis and Venkatesh [4] have
% proved effective equidistribution for large closed orbits of semisimple groups on homogeneous
% spaces; see also Mohammadi [32] for a more explicit result in the special case of closed SO(2, 1)-
% orbits in SL(3, Z)\ SL(3, R). Recently also Lindenstrauss and Margulis [23] have obtained an
% effective density-type result for arbitrary SO(2, 1)-orbits in SL(3, Z)\ SL(3, R), and used this
% to give an effective proof of a theorem of Dani and Margulis regarding the values of indefinite
% ternary quadratic forms at primitive integer vectors.

%%%%%%%%%%%%%%%%%%%%%%

\subsection{Results}
For $x\in \R$ and $y>0$, let
\begin{align} 
n(x)&=\begin{pmatrix}1 & x\\0 & 1\end{pmatrix},&  a(y)&=\begin{pmatrix}\sqrt y& 0\\ 0 & 1/\sqrt y\end{pmatrix}.
\end{align}
It is a fundamental result in homogeneous dynamics that long closed horocycles $\{n(x)\colon \strut x \in [-\frac12,\frac12)\}$ on $X=\sltz\quot \sltr$ equidistribute under the geodesic flow $a(y)$ as $y\to 0$. That is, for every bounded continuous $f\colon X\to \R$, 
\beq
\lim_{y\to 0}\int_{-\frac 12}^{\frac 12} f(n(x)a(y))\, dx = \int_X f(g) \, d\mu_X(g),
\eeq 
where $\mu_X$ is the Haar probability measure on $X$. This can be proved using thickening followed by applying the mixing property of $a(y)$ \cite{margulis_some_aspects_2004}, which is a general approach when the integral is taken over all unstable directions of a flow. The rate of convergence was given in \cite{zagier_eisenstein_1979, sarnak_asymptotic_1981} and is related to the zero-free region for the Riemann zeta function. It is proved that for $f\in C^{\infty}_0 (X)$, 
\beq
\int_{-\frac 12}^{\frac 12} f(n(x)a(y))\, dx = \int_X f(g) \, d\mu_X(g) + o_f(y^{1/2}),
\eeq 
where the error term depends on the error term in the Prime Number Theorem. 
In the present paper we establish a similar result for certain horocycle lifts. 

Let $G=\ASL(2,\R)=\sltr\ltimes \R^2$, and set $\Gamma=\ASL(2,\Z)=\SL(2,\Z)\ltimes \Z^2$, which is a lattice in $G$. We view elements of $G$ as ordered pairs $(M,\bm x)$ with $M\in \sltr$ and $\bm x\in \R^2$, and multiply them following the rule
\beq
(M,\bm x)(M',\bm x')=(MM',\bm x M' + \bm x')
\eeq
thinking of $\bm x, \bm x'$ as row vectors in $\R^2$. Writing $Y=\Gamma\quot G$, we equip this homogeneous space with the Haar probability measure that we denote $\mu_Y$. When no confusion can arise we shorten $(M,\bm 0)$ to $M$. 

It is important to note the relationship between $X$ and $Y$, the latter being a bundle over the former with two-dimensional torus fiber. The space $X$ parametrizes unimodular lattices in $\R^2$, while $Y$ is the space of lattice translates in $\R^2$. Thus, each point in $X$ corresponds to a lattice $\Lambda\subset \R^2$, and a choice of $\bm x\in \Lambda\quot \R^2$  determines the translated lattice $\Lambda + \bm x\subset \R^2$, which corresponds to a point of $Y$. 

For a continuous function $\bm \xi =(\xi_1,\xi_2)\colon \R \to \R^2$ define 
\beq
\tilde n(x) = (\1,\bm \xi(x)) n(x);
\eeq
we call this is a \emph{lift} of a horocycle from $X$ to $Y$. Let $\rho\colon \R\to \R$ be nonnegative, continuously differentiable, supported on a compact interval (without loss of generality,  $\supp\rho\subset (-1,0]$), and of integral $1$. It is natural to ask whether the lifted measures $\nu_y$ defined by 
\beq
\int_{\R} f(\tilde n(x) a(y))\, \rho(x) dx = \int_Y f(g) \, d\nu_y(g) \label{eq:measures}
\eeq
have a weak-* limit as $y\to0$.
Using Ratner's Theorem \cite{ratner_raghunathans_1991}, Elkies and McMullen \cite{ElkiesMcM04} established a condition on $\vecxi$ under which $\nu_y$ converges to $\mu_Y$. Let $\Xi(x)=x\xi_1(x)+\xi_2(x)$. A horocycle lift is called \emph{rationally linear} if for some $\alpha,\beta\in\Q$, 
\beq
\leb\{x\colon \Xi(x)=\alpha x+\beta\}>0.\label{eq:rational}
\eeq

\begin{theorem}[{\cite[Th.~2.2]{ElkiesMcM04}}] Suppose that a horocycle lift $\tilde n$ is \emph{not} rationally linear, that $\Xi$ is Lipschitz, and that $\xi_1$ is continuous. Then, $\nu_y\to \mu_Y$ in the weak-* topology as $y\to 0$. 
\end{theorem}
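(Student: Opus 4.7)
My plan follows the classical Ratner-theoretic scheme for weak-$*$ convergence under unipotent flows, in four steps.

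First, I verify tightness. The projection $\pi_*\nu_y$ agrees with the classical horocycle average $\int f(n(x)a(y))\rho(x)\,dx$, which converges to $\mu_X$, and the fibers of $\pi\colon Y\to X$ are compact two-tori; hence Dani--Margulis non-divergence on $X$ implies that every subsequential weak-$*$ limit $\nu$ of $\{\nu_y\}$ is a probability measure with $\pi_*\nu=\mu_X$.

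Second, I show that every such $\nu$ is invariant under right multiplication by $U=\{n(s):s\in\R\}$. Using $a(y)n(s)=n(sy)a(y)$ and the semidirect product law,
\begin{equation}
\tilde n(x)a(y)n(s) = (\1,\,\bm\xi(x)-\bm\xi(x+sy))\cdot\tilde n(x+sy)a(y).
\end{equation}
Continuity of $\xi_1$ together with the (Lipschitz, hence continuous) $\Xi$ gives continuity of $\bm\xi$, so $(\1,\bm\xi(x)-\bm\xi(x+sy))\to\mathrm{id}$ uniformly on $\supp\rho$ as $y\to 0$. A change of variables $x\mapsto x-sy$ and continuity of $\rho$ then yield $\int f\circ\tilde n(\cdot)a(y)n(s)\,\rho-\int f\circ\tilde n(\cdot)a(y)\,\rho\to 0$, which is the $U$-invariance of $\nu$.

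Third, I apply Ratner's measure classification: $\nu$ is a convex combination of algebraic measures supported on closed orbits of closed connected subgroups $H$ with $U\subseteq H\subseteq G$. The constraint $\pi_*\nu=\mu_X$ forces the projection of $H$ into $\sltr$ to be surjective; combined with the irreducibility of the standard $\sltr$-representation on $\R^2$, the only possibilities for $H^0$ (up to $G$-conjugation) are $G$ itself, giving $\mu_Y$, or $\sltr\times\{0\}$, whose closed orbits in $Y$ are the \emph{rational sections} $\sigma_{\bm v}$ through $\Gamma(\1,\bm v)$, parametrised by $\bm v\in\Q^2/\Z^2$ and carrying unique invariant probability measures $\mu_{\bm v}$. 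Therefore
\begin{equation}
\nu = c_\infty\,\mu_Y + \sum_{\bm v\in\Q^2/\Z^2} c_{\bm v}\,\mu_{\bm v},\qquad c_\infty+\sum_{\bm v}c_{\bm v}=1.
\end{equation}

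Finally, I rule out $c_{\bm v}>0$ using hypothesis \eqref{eq:rational}. From $\bm\xi(x)n(x)a(y)=(\sqrt y\,\xi_1(x),\,\Xi(x)/\sqrt y)$ and the fiber lattice $\Z^2 n(x)a(y)$ spanned by $(\sqrt y,x/\sqrt y)$ and $(0,1/\sqrt y)$, the distance in $Y$ from $\tilde n(x)a(y)$ to $\sigma_{(v_1,v_2)}$ is, up to lower-order terms, of order $\operatorname{dist}(\Xi(x)-v_1 x-v_2,\,x\Z+\Z)/\sqrt y$. If $c_{\bm v}>0$, then this quantity is $o(1)$ on a set of positive $\rho$-measure as $y\to 0$; since $\xi_1$ is bounded on $\supp\rho$, only finitely many integer coefficients arise, and partitioning by them forces $\Xi(x)=(v_1+a)x+(v_2+b)$ with fixed $a,b\in\Z$ on a positive-measure subset. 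With $\alpha=v_1+a\in\Q$ and $\beta=v_2+b\in\Q$ this directly contradicts \eqref{eq:rational}. Hence $c_{\bm v}=0$ for every $\bm v$, so $\nu=\mu_Y$; as this is the only possible subsequential weak-$*$ limit, $\nu_y\to\mu_Y$.

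The main obstacle is the fourth step: extracting the rigid equation $\Xi(x)=\alpha x+\beta$ on a positive-measure set from the abstract statement $c_{\bm v}>0$. This requires a careful quantitative analysis of how the lift $\tilde n(x)a(y)$ sits in the fiber relative to the narrow lattice $\Z^2 n(x)a(y)$ as $y\to 0$, and it is here that the Lipschitz control on $\Xi$ plays its essential role, preventing the trajectory from approaching $\sigma_{\bm v}$ through wild oscillations of $\Xi$ near a rational line rather than through genuine linearity.
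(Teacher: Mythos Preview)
The paper does not prove this statement; it is quoted from \cite{ElkiesMcM04} as background, and the paper's own contribution is the \emph{effective} Theorem~\ref{th:main}, whose proof proceeds by Fourier decomposition on the torus fibre and exponential-sum estimates (Sections~\ref{sec:number}--\ref{sec:error}) rather than through Ratner's theorem. Your Ratner-theoretic outline is therefore not being compared to anything in the present paper, though it is broadly the strategy that Elkies and McMullen themselves use, as the paper remarks.

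That said, your Step~2 has a genuine gap. You write the identity
\[
\tilde n(x)a(y)n(s)=(\1,\bm\xi(x)-\bm\xi(x+sy))\,\tilde n(x+sy)a(y)
\]
in $G$ and conclude that, because the left factor tends to the identity, the two integrals agree in the limit. But $f$ lives on $\Gamma\backslash G$, and left multiplication by an element close to the identity does \emph{not} move points of $\Gamma\backslash G$ only a little. One must conjugate to the right: with $g=\tilde n(x')a(y)$, $M=n(x')a(y)$ and $\bm v=\bm\xi(x'-sy)-\bm\xi(x')$, one has $(\1,\bm v)\,g=g\,(\1,\bm vM)$, and the second coordinate of $\bm vM$ is
\[
\frac{v_1x'+v_2}{\sqrt y}=\frac{\Xi(x'-sy)-\Xi(x')+sy\,\xi_1(x'-sy)}{\sqrt y}.
\]
It is precisely the Lipschitz bound $|\Xi(x'-sy)-\Xi(x')|\le Lsy$ (together with boundedness of $\xi_1$) that forces this to be $O(s\sqrt y)\to 0$; mere continuity of $\Xi$ gives only $o(1)/\sqrt y$, which blows up. Hence the Lipschitz hypothesis is essential already in Step~2 to obtain $U$-invariance of the limit, not, as you suggest at the end, primarily in Step~4. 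Once this repair is made your scheme is the standard one, though Step~4 as written is still a sketch: deducing from $c_{\bm v}>0$ that the orbit lingers near $\sigma_{\bm v}$ on a positive-measure set of $x$, uniformly as $y\to 0$, requires more than the bare weak-$*$ statement.
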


In the present paper, we make convergence in this theorem effective, which requires an effective version of rational nonlinearity. We say $\Xi$ is \emph{$D$-nice} for some $D\ge 2$ if $\Xi$ is twice continuously differentiable and there exist $x_0 \in\R$ and $C_1, C_2>0$ such that 
\begin{align}
 C_1|x-x_0|^{D-2} \le |\Xi''(x)| \le C_2|x - x_0|^{D-2}
\end{align}
for every $x$ in the support of $\rho$. For such a lift, set $C = \max\left\{C_1^{-1/2}, C_2^{1/2}\right\}.$ We say that $\tilde n$ is \emph{$D$-nice}  if the corresponding $\Xi$ is $D$-nice.

%Let $I\subset \R$ be an interval such that $\supp \rho\subset I$ and let $K\ge 2$ be an integer. We say that a lift $\tilde n(x)$ is \emph{$K$-irrational} if $\Xi$ is of class $C^K$ on $I$ and $|\Xi^{(K)}|$ is bounded away from $0$ on $I$. We prove the following theorem. 

\begin{theorem}\label{th:main}
Fix a density function $\rho$ as before, and let $\tilde n$ be $D$-nice. 
Assume that $f$, $\rho$, and $\tilde n$ are such that all norms in \eqref{eq:constant} are finite.  Then for every $\eps>0$ there exists a constant $C(\eps, f,\rho,\tilde n)$ such that 
\beq
\left\lvert \nu_y(f) - \mu_Y(f) \right\rvert \le   
 C(\eps,f,\rho,\tilde n)\, y^{\min\left\{\frac1{16},\frac1{2D}\right\}-\eps}
\eeq
for all $y\in(0,1).$ Moreover, we can take
\begin{align}\label{eq:constant}
 C(\eps,f,\rho,\tilde n) = A(\eps,\eta) C \|f\|_{C^8_{\text b}}  
 (C + \|\Xi\|_{C^1_{\text b}} + \|\xi_1\|_{L^\infty})
 \|\rho\|_{W^{1,1}}^{1-\eta} \|\rho\|_{W^{2,1}}^\eta 
\end{align}
for any $\eta\in(0,1)$, and the function $A$ is universal. 
\end{theorem}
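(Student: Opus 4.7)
My plan is to Fourier-decompose $f$ along the two-torus fiber of the projection $\pi\colon Y\to X$, thereby reducing the theorem to effective estimates for oscillatory horocycle integrals on the base, indexed by characters $\mathbf{m}\in\Z^2$. Writing $\widehat f_{\mathbf{m}}$ for the Fourier coefficients (with $\mathbf{m}=\mathbf{0}$ the fiberwise average), and using the multiplication rule to compute
\begin{equation}
\tilde n(x) a(y) = \bigl(n(x)a(y),\, (\xi_1(x)\sqrt y,\, \Xi(x)/\sqrt y)\bigr),
\end{equation}
the zero-mode contribution reduces to $\int_\R \widehat f_{\mathbf{0}}(n(x) a(y)) \rho(x)\, dx$, which equals $\mu_Y(f)$ up to $O_\eps(y^{1/2-\eps})$ by the classical Sarnak--Zagier effective equidistribution of long horocycles in $X$. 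The substantive task is to estimate the non-zero-mode contributions uniformly in $\mathbf{m}$.

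For $\mathbf{m}\neq\mathbf{0}$, the $\mathbf{m}$-th term carries an oscillatory factor whose phase, after the action of $a(y)$, has leading part $m_2\Xi(x)/\sqrt y$ (rapidly oscillating for small $y$) together with a slow $\xi_1$-perturbation of size $\sqrt y$. The $D$-niceness hypothesis supplies two-sided bounds $|x-x_0|^{D-2}\lesssim |\Xi''(x)|\lesssim |x-x_0|^{D-2}$ on the support of $\rho$, which is the standard setting for van der Corput / stationary phase estimates of finite type; these yield, for the $x$-integral against smooth weights, a bound of rough shape $|m_2|^{-1/D} y^{1/(2D)}$, accounting for the $1/(2D)$ exponent. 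To obtain the complementary $1/16$ exponent and, crucially, summability in $\mathbf{m}$, I would unfold the integral against $\widehat f_{\mathbf{m}}(n(x)a(y))$ using a fundamental domain for $\sltz$ acting on the upper half-plane. This Farey / continued-fraction decomposition---the same mechanism used by Str\"ombergsson for linear lifts and by Browning and the author for rational quadratic lifts---converts the $x$-integral into a sum over rationals $a/c$ carrying incomplete exponential sums of Kloosterman type; Weil's square-root bound arising from the Riemann hypothesis for curves over $\F_p$ then yields $O(c^{1/2+\eps})$ cancellation, and integration along $c$ delivers the $y^{1/16}$ rate. Taking the better of the two estimates in each regime yields the final exponent $\min(1/16,1/(2D))$.

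The explicit form of the constant in \eqref{eq:constant} is tracked along the way: $\|f\|_{C^8_{\text b}}$ controls the decay of $\widehat f_{\mathbf{m}}$ in $\|\mathbf{m}\|$, so that after applying the pointwise estimates the sum over $\mathbf{m}$ converges absolutely; the factor $C$ appears through the $D$-nice constants in the stationary-phase estimate; $\|\Xi\|_{C^1_{\text b}}$ and $\|\xi_1\|_{L^\infty}$ enter when controlling the phase and its slow component; and the Sobolev norms $\|\rho\|_{W^{1,1}}$, $\|\rho\|_{W^{2,1}}$ arise via integration by parts, with the free parameter $\eta\in(0,1)$ interpolating between them. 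The chief obstacle will be executing the unfolding step uniformly: one must combine stationary phase in $x$ with Weil's bound on the Kloosterman-type sums with control that is polynomial in $\|\mathbf{m}\|$ and uniform in the cusp parameter $c$ and in $y$, so that summing over non-zero modes produces a constant of precisely the form \eqref{eq:constant}.
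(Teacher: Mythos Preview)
Your overall architecture matches the paper's: Fourier decompose on the torus fiber, handle the zero mode via effective horocycle equidistribution on $X$, and unfold the non-zero modes over $\sltz$. The gap is in what happens after unfolding. The exponential sums that arise are \emph{not} Kloosterman sums amenable to a direct Weil bound: they have the form
\[
S_c(k,l,n)=\sum_{\substack{(c,d)=1\\0\le d<c}} e\!\left(\frac{l\bar d-kd}{c}-nc\,\Xi\!\left(-\tfrac dc\right)\right),
\]
and the term $-nc\,\Xi(-d/c)$ is a transcendental phase for general $\Xi$. This is exactly what distinguishes the present setting from Str\"ombergsson's linear lifts and the quadratic lifts of Browning and the author, where the phase is algebraic and Weil/Sali\'e-type bounds apply directly. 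The paper's key input (Proposition~\ref{prop:cancellations}) treats $S_c$ for $l\ne0$ by a Weyl--van der Corput differencing, which reduces the $\Xi$-contribution to a slowly varying factor, then uses Abel summation to isolate a purely algebraic complete sum $U_c(h,k,l)=\sum_q e_c(l(\overline{q+h}-\bar q)+kq)$; only at that point do Bombieri/Cochrane--Zheng (Weil-type) bounds enter. The composite bound is roughly $|S_c|\ll c^{7/8+\eps}$, not $c^{1/2+\eps}$, and it is this weaker exponent that produces $y^{1/16}$ rather than the $y^{1/4}$ of the earlier papers --- so your claim ``$c^{1/2+\eps}\Rightarrow y^{1/16}$'' is internally inconsistent.

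A second issue: the $y^{1/(2D)}$ rate does not come from stationary phase on the continuous $x$-integral as you describe, because the amplitude $\widehat f_{\mathbf m}(n(x)a(y))$ oscillates at scale $y$ along the horocycle and is not a slow weight. In the paper $y^{1/(2D)}$ arises instead from van der Corput applied to the \emph{discrete} sum $S_c(k,0,n)$ over $d\pmod c$, using $D$-niceness of $\Xi$. Moreover, the exponents $1/(2D)$ and $1/16$ are not two competing estimates for the same quantity to be optimized over; they bound two disjoint pieces of a further Fourier expansion of $\tilde f_n$ in the Iwasawa $u$-coordinate (the $l=0$ and $l\ne0$ modes, respectively), and the final error is their sum, hence governed by $\min(1/16,1/(2D))$.
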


We observe that the $\eps$-loss in the error term can be replaced by a logarithmic loss with a slightly more tedious computation as in  \cite{browning_vinogradov_2016}. The overall error would then be a constant times $y^{\min\left\{\frac1{16},\frac1{2D}\right\}} \log^{\kappa}(2+1/y)$ for some $\kappa>0$. The norms used to define $C(\eps, f,\rho, \tilde n)$ are rigorously defined in \eqref{eq:norm}, \eqref{eq:sobolev_norm}.

\subsection{Discussion}
This paper stands in the series of works that prove effective equidistribution results in the setting of a sequence on measures supported on the unstable manifold of a diagonal flow. The first and by now classical is \cite{sarnak_asymptotic_1981}; it gives the optimal rate of equidistribution of long closed horocycles on quotients of the $\SL(2,\R)$ by using Eisenstein series to relate this question to the zero-free region of the Riemann zeta function. The case of non-uniform measure on the horocycle was treated by Str\"ombergsson \cite{strombergsson_uniform_2004}; this work also proves effective equidistribution for horocycle pieces of optimal intermediate length (length of piece can be nearly as short as the square root of the length of the horocycle). 
Horocycle lifts to $Y$ were first studied in \cite{ElkiesMcM04} where an ineffective equidistribution theorem for general lifts is proved. Str\"ombergsson \cite{strombergsson_effective_2013} used number-theoretic techniques similar to those employed in the present paper to give a rate for equidistribution of linear irrational lifts (in our notation, these correspond to $\vecxi$ being a constant that is not in $\Q^2$ and our results do not apply as $\Xi'' = 0$). The rate in this setup depends on the Diophantine properties of $\vecxi$. The method of Str\"ombergsson was further developed in \cite{browning_vinogradov_2016} to treat the case of rational quadratic lifts, with the case  $\vecxi(x) = (x/2, - x^2/4)$ being the most interesting. The treatment of this particular lift yields a rate for the convergence of the gap distribution of the sequence $\sqrt n \imod 1$. 

The present paper completes the effectivization of equidistribution theorems for lifts. The powers of $y$ (up to $y^{\eps}$) that appear in error terms in the aforementioned theorems are 
\begin{align}
 y^{1/2}, & \text{ Sarnak \cite{sarnak_asymptotic_1981},} \label{eq:sarnak_rate}\\
 y^{1/4}, & \text{ Str\"ombergsson \cite{strombergsson_effective_2013}, assuming best Diophantine condition,}\\
 y^{1/4}, & \text{ Browning, V. \cite{browning_vinogradov_2016},}\\
 y^{1/16}, & \text{ present work, assuming best lift}.
\end{align}
We also remark that $y^{3/4}$ in \eqref{eq:sarnak_rate} would be equivalent to the Riemann hypothesis. The novelty of the present paper is that it does not make use of quadratic niceties of \cite{browning_vinogradov_2016} or linear simplicity of \cite{strombergsson_effective_2013}, allowing for the treatment of general lifts. 
The key result is Proposition \ref{prop:cancellations}, which establishes cancellations in a certain Kloosterman-like exponential sum \eqref{eq:sum_l} for all values of the indices involved. 

In addition to the theorems mentioned above, we must also mention the recent result of Ubis \cite{ubis_effective_2016}, who used the  ``Fourier method'' on $\R^d$ to prove effective equidistribution of certain manifolds on $(\Gamma\quot \SL(2,\R))^d$. Fix $\tilde a(y) = (a(y),\dots, a(y)) \in \sltr^d$ and consider its $d$-dimensional unstable manifold. Then, given a submanifold that is ``totally curved'' and has positive codimension, Ubis gives a rate of equidistribution of this submanifold under the action of $\tilde a(y)$. Although the method of this paper is different from that of the present work, the setup is quite similar, which gives hope that other equidistribution statements of this flavor  (for example, results of Shah on equidistribution of curves \cite{shah_limiting_2009, shah_asymptotic_2009, shah_equidistribution_2009}) will be effectivized in the near future. 

Related results on effective equidistribution for $\sltr$ include papers of Tanis and Vishe \cite{tanis_uniform_2015} and Flaminio, Forni, and Tanis \cite{flaminio_effective_2015} on period integrals, both building on the work on the seminal paper of Flaminio and Forni \cite{flaminio_invariant_2003} on invariant distributions for the horocycle flow. Effective equidistribution of ``relatively large'' orbits is proven by Einsiedler, Margulis, and Venkatesh \cite{einsiedler_effective_2009}. 

Another direction of refining convergence in Ratner's theorem is extending weak-* convergence to unbounded test functions, known as the problem of \emph{convergence of moments}. 
This question was answered affirmatively in certain situations, relating to theta functions and their application to values of inhomogeneous quadratic forms \cite{marklof_pair_correlation_2003};
the pair correlation function of the sequence $\sqrt n$ modulo $1$ \cite{EMV_two_point}; 
directions of Euclidean lattice points \cite{EMV_directions_2013};
directions of hyperbolic lattice points \cite{directions_hyperbolic}. 

The question of convergence of moments is open for the main result of this paper, Theorem \ref{th:main}, as is the question of the rate of equidistribution of the unipotent flow $\{\tilde n(x) : x\in\R\}$ with $\vecxi(x) = (x/2, -x^2/4).$ We hope to return to these questions in future work. 

\subsection{Plan of paper}
Section \ref{sec:number} contains an application of number-theoretic techniques to control a special exponential sum. In Section \ref{sec:fourier}, we single out the main term from  the integral in the statement of Theorem \ref{th:main}; we then bound the error term in Section \ref{sec:error}. 

\subsection{Notation}
Given functions $f,g\colon S\rightarrow \R$, with 
$g$ positive,  we will write $f\ll g$ if 
there exists a constant $c$ such that 
$|f(s)|\leq cg(s)$ for all $s\in S$.

%ack
\subsection{Acknowledgements}
The author would like to thank Trevor Wooley for useful discussions and Tim Browning and Jens Marklof for comments on earlier versions of this paper. 
The research leading to these results has received funding from the European Research Council under the European Union's Seventh Framework Programme (FP/2007--2013) / ERC Grant Agreement n.\ 291147.

\section{Special exponential sums\label{sec:number}}
In this section we make a detailed examination of the 
exponential sum
\begin{align}
 \label{eq:sum_l}
 S_c(k,l,n) = S = \sum_{\substack{(c,d) = 1\\ 0\le d < c}} e\left( \frac{l\bar d - kd}c - nc \Xi\left( - \frac dc\right)\right).
\end{align}
We distinguish two cases that require different treatment, according as $l=0$ or $l\ne 0$. 

Consider first the case $l=0$; the cancellations in the sum $S$ must come from analytic properties of $\Xi$. Writing
\begin{align}
 S = \sum_{h\mid c} \mu(h) \sum_{d=1}^{c/h} e\left(-nc \Xi\left(-\frac{dh}c\right) - \frac{kdh}c\right) = \sum_{h\mid c} \mu(h) S\left(\frac ch\right),
\end{align}
we massage the inner sum over $d$. 
We have 
\begin{align}
 S(x) = \sum_{d=1}^x e\left( -hnx\Xi\left(-\frac dx\right) - \frac{kd}x\right) = \sum_{d=1}^x e(w(d)).
\end{align}
Since $w''(d) = - \frac{hn}{x} \Xi''\left(-\frac dx\right)$, our assumption on $\Xi$ implies that 
$|w''(d)| \asymp_{C_1,C_2} \frac{hn}x \lvert-\frac dx - z_0\rvert ^{D-2}.$ Let $\delta>0$. Using the van der Corput estimate (cf.\ \cite[p.~8]{graham_kolesnik_1991}) when $\lvert-\frac dx - z_0\rvert > \delta$ and the trivial estimate otherwise, we get the bound
\begin{align}
 |S(x)| \ll \delta x + C_2^{1/2} h^{1/2} n^{1/2} x^{1/2} \delta^{\frac{D-2}2} + \frac{x^{1/2}}{C_1^{1/2} h^{1/2}n^{1/2}} \delta^{-\frac{D-2}2}.
\end{align}
The optimal choice for $\delta$ is $x^{-1/D}$, giving the bound $C h^{1/2}n^{1/2} x^{1-1/D}$ for $S(x)$, where $C = \max\left\{C_2^{1/2}, C_1^{-1/2}\right\}.$
The contribution of the case $l=0$ is thus
\begin{align}
 S & \ll C \sum_{h\mid c} |\mu(h)| h^{1/2} n^{1/2} \left(\frac ch \right)^{1-1/D}\\
 & \ll C n^{1/2} c^{1-1/D} \sum_{h\mid c} |\mu(h)| h^{1/D+1/2-1}\\
%  & \ll C n^{1/2} c^{1-1/D} 2^{\omega(c)}\\
 & \ll_\eps C n^{1/2} c^{1-1/D+\eps}.
\end{align}

Consider now the case $l\ne 0$. We adopt the convention that the range of summation includes only those values of the indices for which the summands are defined, allowing us to drop the coprimality condition. We begin by applying the Weyl-van der Corput inequality \cite[eq.~(2.3.5)]{graham_kolesnik_1991} for some $H\in[1,c]$ to be chosen later, which gives 
\begin{align}
 |S|^2 & \ll \frac{c^2} H + \frac cH \sum_{1\le h\le H} \bigg\lvert\sum_{d = 1}^c e\left(\frac{l(\overline{d+h} - \overline d)}c - nc \left( \Xi\left( - \frac{d+h}c\right) - \Xi \left( - \frac dc\right)\right)  \right)\bigg\rvert.
%  \\
%  &
%  \ll 
%  \frac{c^2} H + \frac cH \sum_{1\le h\le H} |T|;
\end{align}
Writing 
\begin{align}
 a_d & = e\left(\frac{l(\overline{d+h} - \overline d)}c\right),
 &
 b_d &
 =e\left(- nc \left( \Xi\left( - \frac{d+h}c\right) - \Xi \left( - \frac dc\right)\right)\right), 
\end{align}
we set 
\begin{align}
 T = \sum_{d=1}^c a_d b_d;
\end{align}
here $T = T_c(h,l,n)$ depends on  $c$, $h$, $l$, and $n$; and we follow the  convention that the terms with $a_d$ undefined are assumed to be zero. Now we seek to get cancellations in the sum $T$. Summing by parts, we can write
\begin{align}
 \label{eq:T_by_parts}
 T &
 =
 b_c\sum_{q=1}^c a_d + \sum_{d=1}^{c-1} \sum_{q=1}^d a_q (b_d - b_{d+1}) ,
%  \ll \sum_{d = 1}^c \bigg\lvert \sum_{q=1}^d e_c(l(\overline{q+h} - \overline q))\bigg \rvert 
%  \\\notag
%  &\quad \times\bigg\lvert e\left(- nc \left( \Xi\left( - \tfrac{d+h}c\right) - \Xi \left( - \tfrac dc\right)\right)  \right)
%  -
%  e\left(- nc \left( \Xi\left( - \tfrac{d+h+1}c\right) - \Xi \left( - \tfrac {d+1}c\right)\right)  \right)
%  \bigg\rvert
%  \\
%  &
%  \ll 
%   \sum_{d = 1}^c\bigg\lvert \sum_{q=1}^d e_c(l(\overline{q+h} - \overline q))\bigg \rvert 
%   \\
%   &\quad\times
%   \left\lvert e\left( nc \left( \Xi \left( - \tfrac dc\right) -  \Xi\left( - \tfrac{d+h}c\right) + \Xi\left( - \tfrac{d+h+1}c\right) - \Xi \left( - \tfrac {d+1}c\right)\right)  \right)-1
%   \right\rvert
%   \\
%   &\ll \sum_{d = 1}^c |A_d| \cdot |B_d|
%   ,
\end{align}
provided $c\ge 2$ (when $c=1$, the bound $T \ll 1$ is satisfactory). 
Set $A_d = \sum_{q = 1}^d a_q$ and $B_d = b_d -b_{d+1}$;
% % where $e_c(\cdot) = e(\tfrac{\cdot}c).$ 
we need to bound $A_d$ and $B_d$. For the first, we use smoothing to write the sum as a complete sum modulo $c$ followed by standard estimates for exponential sums; for the second, we rely on Taylor's theorem and smoothness of $\Xi$. 

Let $\delta\in (0,1)$ be a number we will choose later depending on $c$, and let $I_\alpha \colon [0,1] \to \{0,1\}$ be the indicator of $[0,\alpha]$ for $\alpha\in [0,1]$. Let $\psi\colon \R \to\R$ be smooth, of integral $1$, supported on $[-1,1]$. Then, $\psi_\delta(x) = \frac1\delta \psi\left(\frac x \delta\right)$ is smooth, of integral $1$, supported on $[-\delta,\delta]$. Set 
\begin{align}
 I^{\delta}_{\alpha} (x) = 
 \begin{cases}
 I_{\alpha + \delta} * \psi_\delta(x),& \alpha+\delta\le 1\\ 
 I_{1}(x), & \alpha+\delta >1.
\end{cases}
\end{align}
Using the notation where $e_c(\cdot) = e(\tfrac{\cdot}c)$, we can write
\begin{align}
 A_d % = \sum_{q=1}^d e_c(l(\overline{q+h} - \overline q)) \\
 & = \sum_{q=1}^c e_c(l(\overline{q+h} - \overline q)) I_{d/c}(q/c)\\
 & = \sum_{q=1}^c e_c(l(\overline{q+h} - \overline q)) I_{d/c}^\delta (q/c) + O(\delta c).
%  \\
%  & = \sum_{k\in \Z} c_{k, d}^\delta \sum_{q=1}^c e_c(l(\overline{q+h} - \overline q) + kq)  + O(\delta c)\\
%  & = \sum_{k\in\Z} c^\delta_{k,d} U_c(h, k,l) +O(\delta c),\label{eq:need_num_theory}
\end{align}
We introduce quantities 
\begin{align}
 c_{k,d}^\delta & = \int_0^1 I_{d/c}^\delta (x) e(-kx) \, dx, &
  U_c(h, k,l)& = \sum_{q=1}^c e_c(l(\overline{q+h} - \overline q) + kq).
\end{align}
Then, we can write 
\begin{align}
 A_d = \sum_{k\in\Z} c^\delta_{k,d} U_c(h, k,l) +O(\delta c).\label{eq:need_num_theory}
\end{align}

Now the sum $U_c(h,k,l)$ can be treated using results of Bombieri \cite{bombieri} for $c$ a prime, generalized by Cochrane and Zheng \cite{cochrane-zheng} for $c$ a prime power. 
We begin by recording the easy multiplicative property
\begin{equation}\label{eq:mult}
U_{q_1q_2} (h, k,l) = U_{q_1}(h, k\bar q_2, l\bar q_2, ) U_{q_2} (h, k\bar q_1, l\bar q_1)
\end{equation}
whenever  $q_1,q_2\in \N$ are coprime and $\bar q_1,\bar q_2\in \Z$ satisfy $q_1\bar q_1+q_2\bar q_2=1.$
This renders it sufficient to study $U_{p^m}(h, k,l)$ for a prime power $p^m$.
We may write $U_{p^m}(h,k,l)$
in the form
\begin{equation}\label{eq:fall}
\sumstar_{\substack{q\imod{p^m}}} e_{p^m}\left(\frac{f_1(q)}{f_2(q)}
\right),
\end{equation}
where $f_1(q) = kq^2(q+h) - hl$ and $f_2(q)=q(q+h)$.
The symbol 
 $\sum^\star$ emphasizes the fact that $q$ is only taken over values for which  $q\nmid f_2(q)$, 
 in which scenario  $f_1(q)/f_2(q)$ means 
$f_1(q)\overline{f_2(q)}$.
We proceed by establishing the following result, which is far form optimal, but sufficient for our needs. 

\begin{lemma}\label{lem:primes}
Let $p$ be a prime and $m\in \N$. Then we have 
\begin{align}
U_{p^m}(h,k,l)
\ll
\begin{cases}p^{1/2}(p,(k,hl))^{1/2} , & m=1,
\\
p^{2m/3} (p^m,(k,lh))^{m/3}, & m>1.
\end{cases}
\end{align}
\end{lemma}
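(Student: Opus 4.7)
The plan is to reduce $U_{p^m}(h,k,l)$ to an exponential sum of the rational function
\[
R(q) \;=\; \frac{f_1(q)}{f_2(q)} \;=\; kq \;-\; \frac{hl}{q(q+h)}
\]
modulo $p^m$, and then invoke Bombieri's generalization of Weil's bound for $m=1$ and the Cochrane--Zheng estimates for $m>1$. The identity $\overline{q+h}-\overline q = -h/(q(q+h))$, valid whenever all inverses exist, is what produces $R$ from the expression in \eqref{eq:fall}. The gcd factor $(p^m,(k,hl))$ in the lemma is meant to absorb the various ways $R$ can degenerate modulo powers of $p$.

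For $m=1$: the numerator and denominator of $R \in \mathbf{F}_p(q)$ have absolutely bounded degree, so provided $R$ is non-constant modulo $p$, Bombieri's theorem \cite{bombieri} yields $|U_p(h,k,l)| \ll p^{1/2}$ with an absolute implied constant. The only way $R$ can be constant on $\mathbf{F}_p^\times \setminus \{-h\}$ is if both $p\mid k$ and $p\mid hl$; in that case $f_1 \equiv 0 \pmod p$ and the trivial bound $|U_p| \leq p$ applies. This matches the claimed $p^{1/2}(p,(k,hl))^{1/2}$ in both regimes, since $(p,(k,hl))$ equals $p$ precisely when both divisibilities hold.

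For $m>1$: we apply the Cochrane--Zheng bound \cite{cochrane-zheng} for exponential sums of rational functions modulo prime powers. Their method bounds such sums by analyzing the critical points of $R$, i.e.\ the zeros of $R'(q) = k + hl(2q+h)/(q(q+h))^2$, which after clearing denominators is a polynomial in $q$ of degree at most $4$. In the non-degenerate situation this produces a saving of the shape $p^{m/3}$ over the trivial bound, giving the $p^{2m/3}$ factor. The gcd factor $(p^m,(k,hl))^{m/3}$ then handles the higher-order degenerations: if $p^j \parallel \gcd(k,hl)$ with $j \geq 1$, then $R \equiv p^j \widetilde R \pmod{p^m}$ for a rational function $\widetilde R$ of the same shape, so the exponential sum is essentially one modulo $p^{m-j}$ multiplied by the fiber count $p^j$; reapplying the prime-power bound to the reduced modulus and absorbing the fiber factor yields the stated power of $(p^m,(k,hl))$.

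The main obstacle is the uniform tracking of $p$-adic valuations in these degenerate regimes: when $\nu_p((k,hl))$ becomes comparable to $m$, one must reduce $R$ by the common $p$-power and recursively invoke Cochrane--Zheng at the lower modulus, carefully balancing the gain against the inflated fiber count. Since the lemma is explicitly declared to be far from optimal, we can afford to proceed through a short case analysis based on $\nu_p(k)$ and $\nu_p(hl)$ rather than optimizing the interaction between the critical-point structure and the common divisibility.
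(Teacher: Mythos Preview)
Your proposal is correct and follows essentially the same route as the paper: Bombieri for $m=1$, Cochrane--Zheng for $m>1$, with the degenerate case $p\mid k$, $p\mid hl$ handled by stripping off the common $p$-power $p^t\parallel(k,hl)$ and recursing at modulus $p^{m-t}$. The paper carries out the same reduction, only making the case split on $d_p(f)$ (the $p$-reduced degree in the Cochrane--Zheng notation) explicit; one small point to watch in both versions is that when you factor $p^t$ out of the numerator $kq^2(q+h)-hl$, the $h$ in the denominator $q(q+h)$ does \emph{not} rescale, so the reduced sum is not literally $U_{p^{m-t}}$ with new parameters but rather another rational-function sum of the same bounded degree to which Cochrane--Zheng applies directly.
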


\begin{proof}
When $m=1$, the result follows from \cite[eq.~(1.2)]{cochrane-zheng}, which is a restatement of Bombieri's result \cite{bombieri}. 

When $m>1$, we use  \cite[Cor.~3.2]{cochrane-zheng}. In their notation, we have $d(f_1) = 3$, $d(f_2) = 2$, $d(f) = 5$, $d^*(f) = 3$,
\begin{align}
 d_p(f) = 
 \begin{cases}
  0, & p\mid k, p\mid hl,\\
  2, & p\mid k, p\nmid hl,\\
  1, & p\nmid k, p\mid hl,\\
  5, & p\nmid k, p\nmid hl.\\
 \end{cases}
\end{align}
In the last three cases, $d^*_p(f) = 2,1,3$, respectively, so that, by  \cite[Cor.~3.2]{cochrane-zheng}, $U_{p^m}(h,k,l) \ll p^{2m/3}$, which is satisfactory. In the first case, we choose $t\in\N$ so that $p^t\| (k,hl)$. If $t\ge m$, the trivial bound on $U_{p^m}(h,k,l)$ is satisfactory. If $t < m$, we write $t = t_1 + t_2$, where $p^{t_1} \mid h$ and $p^{t_2} \mid l$,
\begin{align}
 U_{p^m}(h,k,l) = p^t U_{p^{m-t}}(hp^{-t_1}, kp^{-t}, lp^{-t_2}) \ll p^t p^{2(m-t)/3} = p^{2m/3+t/3},
\end{align}
which is satisfactory.
\end{proof}

We write $c=uv$, where $u$ is square-free and $v$ is square-full. That is, $p\mid u$ implies $p^2 \nmid u$ and $p\mid v$ implies $p^2 \mid v$. 
Using the multiplicativity property \eqref{eq:mult}, we may  
apply Lemma \ref{lem:primes} for different primes
to   arrive at the following result. 

\begin{lemma}\label{lem:cancellations}
 Let $c\in \NN$ and let $h, k, l\in \ZZ$.
Then for every $\eps>0$ we have 
\begin{align}
U_{c}(h,k,l)
\ll_\eps
c^{\eps} 
u^{1/2} (u, (k,hl))^{1/2}
v^{2/3} (v, (k,hl))^{1/3}.
\end{align}
\end{lemma}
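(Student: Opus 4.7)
The plan is to proceed by multiplicativity and reduce the global estimate for $U_c$ to the prime-power estimates of Lemma \ref{lem:primes}. First I would factor $c = uv$, where $u = \prod_{i} p_i$ is the squarefree part and $v = \prod_j p_j^{m_j}$ (with $m_j \ge 2$) is the squarefull part; these are automatically coprime. Using \eqref{eq:mult} repeatedly (which is valid because $U$ has a multiplicative structure with respect to coprime moduli), I would write
\begin{align*}
 U_c(h,k,l) = U_u(h,k',l')\,U_v(h,k'',l''),
\end{align*}
and then further split each factor into a product over the distinct prime-power components. Here $k',l',k'',l''$ are the images of $k$ and $l$ under multiplication by appropriate units modulo the relevant prime powers.

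The next step is to observe that the occurrence of $(k,hl)$ in the right-hand side of Lemma \ref{lem:primes} is stable under the modifications produced by \eqref{eq:mult}: if $\bar q$ is coprime to $p^m$, then
\begin{align*}
 (p^m,(k\bar q,\, hl\bar q))=(p^m,(k,hl)),
\end{align*}
since gcds with $p^m$ are unchanged by multiplication by a unit. This guarantees that after splitting, each local factor can be bounded in terms of the original $(k,hl)$.

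Now I would apply Lemma \ref{lem:primes} prime-by-prime. For the primes $p_i \mid u$, each local sum contributes at most $p_i^{1/2}(p_i,(k,hl))^{1/2}$; taking the product and using that for squarefree $u$ one has $\prod_{p\mid u}(p,(k,hl)) = (u,(k,hl))$, we obtain
\begin{align*}
 |U_u(h,k',l')| \ll u^{1/2}(u,(k,hl))^{1/2}.
\end{align*}
For the prime powers $p_j^{m_j}$ dividing $v$, Lemma \ref{lem:primes} gives a bound of the form $p_j^{2m_j/3}$ times a gcd factor; multiplying these together, the factors $p_j^{2m_j/3}$ combine to $v^{2/3}$, while the gcd factors combine (using coprimality of the $p_j^{m_j}$) to a power of $(v,(k,hl))$.

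The last step is bookkeeping: one collects the constants and gcd exponents and verifies that the combined gcd contribution is at most $c^{\eps}(v,(k,hl))^{1/3}$. The only mildly delicate point — and the step I would be most careful about — is the control of the exponents of $(p_j^{m_j},(k,hl))$ arising from the $m>1$ case: if this grows with $m_j$, one uses the trivial bound $|U_{p^m}(h,k,l)|\le p^m$ to cap the contribution when $(p^m,(k,hl))$ saturates at $p^m$, which lets the gcd factor be reduced to exponent $1/3$ at the cost of the harmless divisor-bound factor $c^{\eps}$. Combining the $u$- and $v$-estimates yields the stated bound.
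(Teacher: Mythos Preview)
Your overall approach---factor $c=uv$, use the multiplicativity \eqref{eq:mult} to reduce to prime powers, and apply Lemma~\ref{lem:primes} locally---is exactly what the paper does (the paper's proof is a single sentence to this effect). Your observation that the gcd $(p^m,(k,hl))$ is unchanged when $k,l$ are multiplied by units mod $p^m$ is the right bookkeeping point.

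However, the fix you propose for the ``mildly delicate'' exponent issue does not work. Take a single prime $p$ with $p^m\|v$ and $(p^m,(k,hl))=p^t$. The bound stated in Lemma~\ref{lem:primes} for $m>1$ reads $p^{2m/3}(p^m,(k,hl))^{m/3}=p^{2m/3+tm/3}$, and the trivial bound is $p^m$. You want $p^{2m/3+t/3}$ up to a factor $p^{m\eps}$ (the share of $c^\eps$ at this prime). If $t=1$ and $m$ is large, then $\min(p^{2m/3+m/3},p^m)=p^m$, while the target is $p^{2m/3+1/3+m\eps}$; the gap $p^{m/3-1/3-m\eps}$ is unbounded as $m\to\infty$. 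So combining the stated Lemma~\ref{lem:primes} bound with the trivial bound only at saturation (or even everywhere $t\ge 1$) is insufficient.

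The resolution is that the \emph{proof} of Lemma~\ref{lem:primes} actually establishes the sharper bound
\[
U_{p^m}(h,k,l)\ll p^{2m/3}\,(p^m,(k,hl))^{1/3}\qquad(m>1),
\]
since for $t<m$ it gives $p^{2m/3+t/3}$ and for $t\ge m$ the trivial bound $p^m=p^{2m/3}(p^m)^{1/3}$. The exponent $m/3$ in the displayed statement is either a slip or a deliberately loose formulation; in any case it is the exponent $1/3$ that is needed and proven. With this, the product over $p^{m_j}\|v$ gives $v^{2/3}(v,(k,hl))^{1/3}$ on the nose, and the $c^\eps$ arises only from multiplying the $O(1)$ implied constants over the $\omega(c)$ primes. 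So your plan is right, but you should invoke the bound actually obtained in the proof of Lemma~\ref{lem:primes} rather than try to repair the weaker stated form with the trivial bound.
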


We substitute this bound into \eqref{eq:need_num_theory} together with the bound for Fourier coefficients
\begin{align}
 c_{k,d}^\delta \ll_{\gamma,\psi} \frac1{1+k} \cdot \left(\frac{1}{k\delta + 1}\right)^\gamma
\end{align}
for $\gamma \ge 0$.  Choosing $\gamma = \eps$, we  get
\begin{align}
 A_d & \ll_{\eps} \sum_{k\in\Z} 
 c^{\eps} 
u^{1/2} (u, (k,hl))^{1/2}
v^{2/3} (v, (k,hl))^{1/3}
\frac1{1+k} \cdot \left(\frac{1}{k\delta + 1}\right)^\eps + O(\delta c)
\\
&
\ll  \sum_{k\in\Z} 
 c^{\eps} 
u^{1/2} (u, hl)^{1/2}
v^{2/3} (v, hl)^{1/3}
\frac{(c,k)^{1/2}}{1+k} \cdot \left(\frac{1}{k\delta + 1}\right)^{\eps} + O(\delta c).
\end{align}
Now we observe that 
\begin{align}\label{eq:gcd_av}
 \sum_{k=1}^K (c, k)^{1/2} & \le \sum_{s\mid c} s^{1/2} \sum_{\substack{k\le K \\ s\mid k}} 1
 \ll 
 \sum_{s\mid c} s^{1/2} \frac{K}{s}\\
 &
 \ll K\sum_{s\mid c} s^{-1/2} 
 \ll K \tau(c) \ll_\eps K c^\eps.
\end{align}
Summing by parts, we conclude that 
\begin{align}A_d\ll_\eps  c^{\eps} 
u^{1/2} (u, hl)^{1/2}
v^{2/3} (v, hl)^{1/3} \delta^{-\eps} + c\delta 
\ll_\eps  
c^{\eps} 
u^{1/2} (u, hl)^{1/2}
v^{2/3} (v, hl)^{1/3},
\end{align}
choosing $\delta = c^{-1/2}$.  Combining this deduction with $B_d \ll \frac{nH}c \sup|\Xi''|$ (and the trivial bound for $b_c$ in the boundary term of \eqref{eq:T_by_parts}), we get
\begin{align}T & 
\ll 
c^{\eps} 
u^{1/2} (u, hl)^{1/2}
v^{2/3} (v, hl)^{1/3}+\sum_{d=1}^{c-1} c^{\eps} 
u^{1/2} (u, hl)^{1/2}
v^{2/3} (v, hl)^{1/3} 
\frac{nH}c \sup|\Xi''| 
\\
&
\ll
(1+\sup|\Xi''| ) nH c^{\eps} 
u^{1/2} (u, hl)^{1/2}
v^{2/3} (v, hl)^{1/3}.
\end{align}
We finally get 
\begin{align}
 |S|^2 & \ll \frac{c^2}H + \frac cH \sum_{1\le h\le H}(1+ \sup|\Xi''|) nH c^{\eps} 
u^{1/2} (u, hl)^{1/2}
v^{2/3} (v, hl)^{1/3}\\
&\ll 
\frac{c^2}H + c^{1+\eps} H n u^{1/2} (u, l)^{1/2}
v^{2/3} (v, l)^{1/3}(1+\sup|\Xi''|),
\end{align}
using 
\eqref{eq:gcd_av} with $h$ in place of $k$. The optimal choice for $H$ is $[c^{1/4}]$, so that 
\begin{align}
 S \ll_\eps (1+ \sup |\Xi''|^{1/2}) c^{5/8+\eps} u^{1/4} v^{1/3} n^{1/2} (u, l)^{1/4}(v, l)^{1/6}.\label{eq:S_l_not_0}
\end{align}
Note that we are not concerned with the value of $\eps$, and thus don't distinguish between $\eps$ and $\eps/2$.
We have thus proved the following proposition. 

\begin{proposition} \label{prop:cancellations}
 Let $S = S_c(k,l, n)$ be the sum defined in \eqref{eq:sum_l}. Write $c = uv$ with $u$ square-free and $v$ square-full. Then, we have
 \begin{align}
  S_c(k,0,n) & \ll_\eps  C n^{1/2} c^{1-1/D+\eps}\\
  S_c(k,l,n) & \ll_\eps (1+ \sup |\Xi''|^{1/2}) c^{5/8+\eps} u^{1/4} v^{1/3} n^{1/2} (u, l)^{1/4}(v, l)^{1/6}.
 \end{align}

\end{proposition}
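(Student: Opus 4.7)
The natural plan is to split into the two regimes $l=0$ and $l\ne 0$, since the mechanism producing cancellations is completely different in the two cases. For $l=0$, the summand carries no explicit arithmetic oscillation in $d$, so all savings must come from the analytic behavior of $\Xi$. I would first use Möbius inversion to drop the coprimality condition, writing $S_c(k,0,n)=\sum_{h\mid c}\mu(h)S(c/h)$, where each inner sum has the form $\sum_{d=1}^x e(w(d))$ with $w''(d)$ controlled by the $D$-nice hypothesis on $\Xi$. Then I would apply van der Corput's second derivative test on the region where $|-d/x-x_0|>\delta$ and use the trivial bound on the complementary interval of length $O(\delta x)$, optimizing $\delta=x^{-1/D}$ to land on a bound of order $Ch^{1/2}n^{1/2}x^{1-1/D}$. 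Summing over $h\mid c$ with the divisor bound yields the first clause.

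For $l\ne 0$, the key idea is to extract arithmetic cancellations from the Kloosterman-type residue $\bar d$. I would first apply the Weyl–van der Corput inequality with a parameter $H\in[1,c]$ to reduce $|S|^2$ to an average over shifts $h\in[1,H]$ of a sum $T_c(h,l,n)=\sum_d a_db_d$, where $a_d$ carries the arithmetic phase $e_c(l(\overline{d+h}-\bar d))$ and $b_d$ carries only the analytic phase. A summation by parts then separates the two: $A_d=\sum_{q\le d}a_q$ needs arithmetic input, while $B_d=b_d-b_{d+1}$ can be controlled by Taylor expansion and $\sup|\Xi''|$, giving $B_d\ll nH\sup|\Xi''|/c$.

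The hard step, and the true heart of the proposition, is bounding the incomplete sum $A_d$. Here I would smooth the indicator $I_{d/c}$ by convolution with a bump of scale $\delta$, expand into Fourier coefficients $c^\delta_{k,d}$, and thereby reduce to the \emph{complete} sum $U_c(h,k,l)=\sum_q e_c(l(\overline{q+h}-\bar q)+kq)$. The complete sum is multiplicative in $c=uv$ (square-free times square-full parts), so Lemma \ref{lem:primes}, which rests on Bombieri's Weil-type bound for $m=1$ and Cochrane–Zheng for $m>1$, supplies Lemma \ref{lem:cancellations}. Combining this with the rapid decay of $c^\delta_{k,d}$ in $k$, the divisor-type sum $\sum_{k\le K}(c,k)^{1/2}\ll_\eps Kc^\eps$ from \eqref{eq:gcd_av}, and the choice $\delta=c^{-1/2}$ gives a clean bound for $A_d$.

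At the end one assembles these pieces: $T\ll (1+\sup|\Xi''|)nHc^\eps u^{1/2}(u,hl)^{1/2}v^{2/3}(v,hl)^{1/3}$, plug back into the Weyl–van der Corput inequality, average the gcd factor in $h$ using \eqref{eq:gcd_av} a second time, and balance the two terms in $c^2/H+c^{1+\eps}Hn(\cdots)$ by choosing $H\asymp c^{1/4}$. Taking square roots delivers the second clause of the proposition. The main obstacle is genuinely the first reduction to $U_c$ and invoking the Weil bound with polynomial-power-modulus generalizations; everything else is bookkeeping and optimization, and the $\eps$-losses can be absorbed without issue since we do not distinguish between $\eps$ and $\eps/2$.
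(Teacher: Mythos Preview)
Your proposal is correct and follows the paper's proof essentially step for step: the same M\"obius inversion plus van der Corput treatment for $l=0$, and for $l\ne 0$ the same Weyl--van der Corput differencing, summation by parts, smoothing to reach the complete sum $U_c(h,k,l)$, invocation of Lemmas~\ref{lem:primes}--\ref{lem:cancellations}, and the identical parameter choices $\delta=c^{-1/2}$, $H\asymp c^{1/4}$. There is nothing to add.
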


\section{Fourier decomposition\label{sec:fourier}}
%fourier series

In this section we develop the tools necessary to prove Theorem \ref{th:main} and decompose $f$ into a Fourier series on the torus. We proceed exactly as in   \cite{strombergsson_effective_2013, browning_vinogradov_2016}. 
To begin with we note that 
\begin{align}
 f((1,\bm \xi)M)=f((1,\bm \xi+\bm n)M)
\end{align}
for $\bm n \in \Z^2$. 
So for $M$ fixed, $f$ is a well defined function on $\R^2/\Z^2$ and we can expand it into a Fourier series as 
\beq \label{eq:fourier}f((1,\bm \xi)M)=\sum_{\bm m\in\Z^2}\hat f(M,\bm m) e(\bm m\bm \xi),\eeq
where
\begin{align}\hat f(M,\bm m)=\int_{\T^2}f((1,\bm \xi')M)e(-\bm m \bm \xi')d\bm \xi'.
\end{align}
Note that 
\beq\hat f(TM,\bm m)=\hat f(M,\bm m (T^{-1})^t),
\label{eq:transpose}\eeq
for $T\in\sltz.$ Set $\tilde f_n(M)=\hat f(M,(n,0))$.  
These functions of $M\in\sltr$ are left-invariant under the group $\left(\begin{smallmatrix}1&\Z\\0&1\end{smallmatrix}\right)$ by \eqref{eq:transpose}.

Now it follows from 
 \eqref{eq:transpose}  that
\begin{align}
 \tilde f_n\left(\abcd M\right)=\hat f \left(\abcd M,(n,0)\right)
 &=\hat f \left(M,(n,0)\begin{pmatrix} d & -c\\ -b & a\end{pmatrix}\right)\\
 &=\hat f(M,(nd,-nc)).
\end{align}
Therefore we can rewrite \eqref{eq:fourier} with $\bm \xi= (\xi_1(x),\xi_2(x))$ and $M=\big(\begin{smallmatrix}\sqrt y&x/\sqrt y\\0&1/\sqrt y\end{smallmatrix}\big)$ as 
\beq f\left(\left(1,\bm\xi\right)M\right)=\tilde f_0(M)+\sum_{n\ge 1}\sum_{(c,d)=1} \tilde f_n\left(\begin{pmatrix}*&*\\c&d\end{pmatrix}M\right)e\left(n\left(d\xi_1(x) - c\xi_2(x)\right)\right),\label{eq:afterfourier}\eeq
where $\left(\begin{smallmatrix}*&*\\c&d\end{smallmatrix}\right)=\smallabcd$ is any matrix in \sltz\ with $c$ and $d$ in the second row as specified. 

%main term

Integrating  \eqref{eq:afterfourier} over $x$, we obtain
\beq
\int_{\R} f(\tilde n(x)a(y))\,\rho(x)dx=
M(y)+E(y), 
\eeq
where 
\beq\label{eq:main_term}
M(y)=
\int_{\R} \tilde f_0\begin{pmatrix}\sqrt y&x/\sqrt y\\0&1/\sqrt y\end{pmatrix} \,\rho(x)\,dx
\eeq
and 
\beq
E(y)=\sum_{\substack{n\ge1\\ (c,d) =1}} \int_{\R} e\left(n\left(d\xi_1(x) - c\xi_2(x)\right)\right)\tilde f_n\left(\begin{pmatrix}*&*\\c&d\end{pmatrix} \begin{pmatrix}\sqrt y&x/\sqrt y\\0&1/\sqrt y\end{pmatrix}\right)\,\rho(x)dx.\label{eq:errorterms}
\eeq
The main term in this expression is $M(y)$ and, as  is well-known 
(cf.~\cite{flaminio_invariant_2003, strombergsson_deviation_2013}), we have 
\begin{align}M(y)=\int_X f\, d\mu\int_\R \rho(x)dx +O_\eps(\|f\|_{C_{\mathrm{b}}^4} \|\rho\|_{W^{1,1}} y^{1/2-\eps})\end{align}
for every $\eps>0$, where the Sobolev norm of $\rho$ is defined in \eqref{eq:sobolev_norm}.
This statement is nothing more than effective equidistribution of horocycles under the geodesic flow on $\sltz\quot\sltr.$ We need not seek the best error term for this problem, since there will be larger contributions to the error term in Theorem \ref{th:main}.

It remains to estimate $E(y)$  as $y\to 0$, which we do in Section \ref{sec:error}. 

%estimates from Strom

\medskip

We end this section with several technical results that will help us to estimate $E(y)$. First, however, we give a precise definition  of $\|\cdot\|_{C_{\mathrm{b}}^m}$ and $\|\cdot\|_{W^{k,p}}$ for functions on $G$ and hence also on $X$. Following \cite{strombergsson_effective_2013}, we let $\mathfrak g = \Sl(2,\R)\oplus  \R^2$ be the Lie algebra of $G$ and fix  
\begin{align}\begin{aligned}
X_1&=\left(\left(\begin{smallmatrix}0&1\\0&0\end{smallmatrix}\right),\bm 0\right),\quad 
X_2=\left(\left(\begin{smallmatrix}0&0\\1&0\end{smallmatrix}\right),\bm 0\right),\quad
X_3=\left(\left(\begin{smallmatrix}1&0\\0&-1\end{smallmatrix}\right),\bm 0\right),\\
X_4&=\left(\left(\begin{smallmatrix}0&0\\0&0\end{smallmatrix}\right),(1,0)\right),\quad
X_5=\left(\left(\begin{smallmatrix}0&0\\0&0\end{smallmatrix}\right),(0,1)\right)
\end{aligned}\end{align}
to be a basis of $\mathfrak g$. Every element of the universal enveloping algebra $U(\mathfrak g)$ corresponds to a left-invariant differential operator  on functions on $X$.  We define
\beq\label{eq:norm}
\|f\|_{C_{\mathrm{b}}^m} = \sum_{\deg D\le m} \|Df\|_{L^\infty},
\eeq
where the sum runs over monomials in $X_1,\dots,X_5$ of degree at most $m$. 
We also Sobolev norms of functions on \R. For $1\le p< \infty$ and a positive integer $k$, set
\beq\label{eq:sobolev_norm}
\|\rho\|_{W^{k,p}} = \sum_{s=0}^k \|\rho^{(s)}\|_{L^p} = \sum_{s=0}^k \left( \int_\R\nolimits \lvert \rho^{(s)}(x) \rvert^p dx\right)^{1/p}.
\eeq

The following result is
  \cite[Lemma~4.2]{strombergsson_effective_2013}.

\begin{lemma}\label{lem:andreas_fourier1}
Let $m\ge0$ and $n>0$ be integers. Then 
\begin{align}\tilde f_n\abcd \ll_m \frac{\|f\|_{C_{\mathrm{b}}^m}}{n^m(c^2+d^2)^{m/2}}, \quad \forall \abcd \in \SL(2,\R). \end{align}
\end{lemma}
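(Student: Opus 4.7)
My plan is to obtain the bound by $m$-fold integration by parts in the Fourier integral defining $\tilde f_n(M)=\hat f(M,(n,0))$, after a change of variables that turns the frequency $(n,0)$ into one along the direction $(d,-c)$, whose Euclidean norm is $\sqrt{c^2+d^2}$. The case $m=0$ is immediate from $|\hat f(M,(n,0))|\le\|f\|_{L^\infty}\le\|f\|_{C^0_{\mathrm{b}}}$, so assume $m\ge 1$.

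First I would rewrite
\[
\hat f(M,(n,0))=\int_{\T^2} f(M,\bm\xi'M)\,e(-n\xi_1')\,d\bm\xi'
\]
and substitute $\bm\eta=\bm\xi'M$. Since $\det M=1$ the measure is preserved, and reading off the first column of $M^{-1}$ gives $\xi_1'=d\eta_1-c\eta_2$. The domain becomes a fundamental parallelogram $F$ for the lattice $\Z^2 M\subset\R^2$, yielding
\[
\hat f(M,(n,0))=\int_F f(M,\bm\eta)\,e\bigl(-n(d\eta_1-c\eta_2)\bigr)\,d\bm\eta.
\]

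Next I would verify the two periodicities needed to integrate by parts without boundary terms. Because $(\1,\bm\ell)\in\Gamma$ for $\bm\ell\in\Z^2$ and $(\1,\bm\ell)(M,\bm\eta)=(M,\bm\eta+\bm\ell M)$, the function $f(M,\cdot)$ is $\Z^2 M$-periodic. The character is $\Z^2 M$-periodic because $ad-bc=1$ forces $(d,-c)\cdot(\bm\ell M)=\ell_1\in\Z$ for every $\bm\ell\in\Z^2$.

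Finally I would apply the directional operator $D:=d\,\partial_{\eta_1}-c\,\partial_{\eta_2}$ iteratively. At the point $(M,\bm\eta)$ the coordinate derivative $\partial_{\eta_i}$ coincides with the left-invariant field $X_{3+i}$ on $G$, so $D$ agrees with $dX_4-cX_5$ when acting on $f$. Its eigenvalue on the character equals $-2\pi in(c^2+d^2)$, so $m$ integrations by parts produce
\[
\hat f(M,(n,0))=\frac{\pm 1}{\bigl(2\pi n(c^2+d^2)\bigr)^m}\int_F(D^m f)(M,\bm\eta)\,e\bigl(-n(d\eta_1-c\eta_2)\bigr)\,d\bm\eta
\]
up to a unimodular constant. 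Expanding $D^m=(dX_4-cX_5)^m$ by the binomial theorem shows $\|D^m f\|_{L^\infty}\le(|c|+|d|)^m\|f\|_{C^m_{\mathrm{b}}}\ll_m(c^2+d^2)^{m/2}\|f\|_{C^m_{\mathrm{b}}}$, and since $F$ has area $1$ the advertised bound follows. I do not anticipate any genuine obstacle; the only delicate point is the $\Z^2 M$-periodicity of the character, which depends crucially on $\det M=1$.
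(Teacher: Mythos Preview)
Your argument is correct. The change of variables $\bm\eta=\bm\xi'M$, the verification of $\Z^2M$-periodicity of both $f(M,\cdot)$ and the character (using $ad-bc=1$), the identification of $\partial_{\eta_i}$ with the left-invariant field $X_{3+i}$ (since $(M,\bm\eta)\exp(tX_{3+i})=(M,\bm\eta+t\bm e_i)$), and the $m$-fold integration by parts all check out as written. The final binomial bound $(|c|+|d|)^m\le 2^{m/2}(c^2+d^2)^{m/2}$ is exactly what is needed.

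The paper itself gives no proof of this lemma: it merely cites \cite[Lemma~4.2]{strombergsson_effective_2013}. Your write-up therefore supplies the missing details rather than competing with an existing argument in the paper. The approach you take---repeated integration by parts in the torus variable against the left-invariant fields $X_4,X_5$---is the natural one and is essentially the argument Str\"ombergsson uses (he phrases it equivalently via $\|X_4^jX_5^{m-j}f\|_{L^\infty}$ bounds). So there is no genuine methodological difference; you have reconstructed the standard proof.
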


Passing to Iwasawa coordinates in \sltr, we  write 
\beq\label{eq:iwasawa}\tilde f_n(u,v,\theta)=\tilde f_n\left(
 \begin{pmatrix}
  1&u\\
  0&1
 \end{pmatrix}
\begin{pmatrix}
 \sqrt v&0\\
 0&1/\sqrt v
\end{pmatrix}
\begin{pmatrix}
 \cos\theta &-\sin\theta\\
 \sin\theta &\cos\theta
\end{pmatrix}\right).
\eeq
for $u\in \mathbb{R}, v>0$ and $\theta\in \mathbb{R}/2\pi\mathbb{Z}$.
The following  is 
  \cite[Lemma~4.4]{strombergsson_effective_2013}.

\begin{lemma}\label{lem:andreas_fourier2}
Let $m, k_1, k_2, k_3\ge 0$ and $n>0$ be integers, and let $k=k_1+k_2+k_3$. Then
\begin{align}\d_u^{k_1}\d_v^{k_2} \d_\theta^{k_3} \tilde f_n(u,v,\theta)\ll_{m,k} \|f\|_{C_{\mathrm{b}}^{m+k}}n^{-m} v^{m/2-k_1-k_2}.\end{align}
\end{lemma}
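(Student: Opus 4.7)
The plan is to reduce the bound to Lemma \ref{lem:andreas_fourier1} by expressing the coordinate derivatives $\d_u$, $\d_v$, $\d_\theta$ in the Iwasawa chart as explicit combinations of the left-invariant differential operators $L_{X_1}, L_{X_2}, L_{X_3}$ attached to the Lie algebra basis $X_1, X_2, X_3$. The decisive observation is that each $L_{X_j}$ commutes with taking Fourier coefficients: differentiating under the integral sign in the definition of $\tilde f_n$ yields $L_{X_j}\tilde f_n = \widetilde{L_{X_j} f}_n$, and iteration gives $L_X^\alpha \tilde f_n = \widetilde{L_X^\alpha f}_n$ for any word $\alpha$ in the $X_j$. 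Since $\|L_X^\alpha f\|_{C^m_{\mathrm b}} \le \|f\|_{C^{m+|\alpha|}_{\mathrm b}}$ and the second row of $M = n(u)a(v)k(\theta)$ satisfies $c^2+d^2 = 1/v$, Lemma \ref{lem:andreas_fourier1} directly yields $|L_X^\alpha \tilde f_n(u,v,\theta)| \ll_m \|f\|_{C^{m+|\alpha|}_{\mathrm b}} n^{-m} v^{m/2}$.

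For the change of frame, I compute the Maurer--Cartan form $M^{-1}dM$. Using $a(v)^{-1}X_1 a(v) = v^{-1}X_1$ and the standard conjugation formulas for $k(\theta)^{-1}X_j k(\theta)$, inverting the resulting $1$-form matrix produces
\begin{align*}
\d_u &= \frac{\cos^2\theta}{v} L_{X_1} - \frac{\sin^2\theta}{v} L_{X_2} + \frac{\sin 2\theta}{2v} L_{X_3},\\
\d_v &= -\frac{\sin 2\theta}{2v}(L_{X_1} + L_{X_2}) + \frac{\cos 2\theta}{2v} L_{X_3},\\
\d_\theta &= -L_{X_1} + L_{X_2}.
\end{align*}
The coefficients of $\d_u$ and $\d_v$ are $v^{-1}$ times trigonometric polynomials in $\theta$, those of $\d_\theta$ are constants, and crucially no coefficient depends on $u$.

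I then induct on $k = k_1+k_2+k_3$ to establish the operator identity
\begin{equation*}
\d_u^{k_1}\d_v^{k_2}\d_\theta^{k_3} = \sum_{|\alpha|\le k} Q_\alpha(v,\theta)\, L_X^\alpha,
\end{equation*}
where each $Q_\alpha$ is a polynomial in $\cos 2\theta, \sin 2\theta$ whose coefficients are polynomials in $v^{-1}$ of degree at most $k_1+k_2$; in particular $|Q_\alpha| \ll_k v^{-(k_1+k_2)}$ uniformly. The inductive step is a direct Leibniz computation from the three displays above: applying $\d_u$ prepends one letter to $\alpha$ and multiplies $Q_\alpha$ by a $v^{-1}$-factor (with no derivative of $Q_\alpha$, since $Q_\alpha$ is $u$-independent); $\d_v$ either prepends a letter at the cost of a $v^{-1}$-factor, or differentiates $Q_\alpha$ in $v$, which raises the $v^{-1}$-degree by one while leaving $|\alpha|$ fixed; $\d_\theta$ either prepends a letter with a constant coefficient, or differentiates the trigonometric coefficient of $Q_\alpha$, both preserving the $v^{-1}$-degree. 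Combining the operator identity with the bound of paragraph one and summing the $O_k(1)$ terms yields the claimed $\ll_{m,k} \|f\|_{C^{m+k}_{\mathrm b}} n^{-m} v^{m/2-k_1-k_2}$.

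The main technical point is preserving the structural description of the $Q_\alpha$ through the induction: a crude pointwise bound $|Q_\alpha|\ll v^{-(k_1+k_2)}$ alone would not suffice, since $\d_v$-differentiation could a priori inflate the $v^{-1}$-power beyond $k_1+k_2+1$. Tracking that the $Q_\alpha$ remain Laurent polynomials in $v^{-1}$ with uniformly bounded trigonometric coefficients is precisely what makes the inductive step close.
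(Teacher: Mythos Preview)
Your proof is correct. The paper does not actually prove this lemma; it merely quotes it as \cite[Lemma~4.4]{strombergsson_effective_2013} without argument. Your proposal supplies exactly the standard proof: express the coordinate vector fields $\d_u,\d_v,\d_\theta$ in the Iwasawa chart as combinations of the left-invariant fields $L_{X_1},L_{X_2},L_{X_3}$ with coefficients that are trigonometric polynomials in $\theta$ times powers of $v^{-1}$, observe that each $L_{X_j}$ passes through the Fourier integral defining $\tilde f_n$, and then invoke Lemma~\ref{lem:andreas_fourier1} on $L_X^\alpha f$ together with $c^2+d^2=1/v$ for the bottom row of $n(u)a(v)k(\theta)$. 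The inductive bookkeeping you do on the $v^{-1}$-degree of the coefficients $Q_\alpha$ is precisely what is needed, and your remark that the $Q_\alpha$ are independent of $u$ is what makes the $\d_u$-step clean. This is, to my knowledge, also the argument in Str\"ombergsson's paper, so there is no meaningful methodological difference to report.
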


As a consequence of Lemma \ref{lem:andreas_fourier1}, we get the bound 
\begin{align}\label{eq:fn_control}\tilde f_n\left(u,\frac{\sin^2 \theta}{c^2 y},\theta\right) \ll_m \|f\|_{C^m_{\text b}} \min\left\{1,\left(\frac{\lvert \sin\theta\rvert }{nc\sqrt y}\right)^m\right\}\end{align} for every integer $m\ge 0$. We also  note that for $a, A, B > 0$ and $B-A> -1$ we have
\beq
\int_{-\pi}^\pi \frac{d\theta}{\lvert \sin \theta\rvert ^A} \min\left\{1,\left(\frac{\lvert \sin \theta\rvert}{a}\right)^B\right\} \ll \min\{a^{-B},a^{-A+1}\}.\label{eq:theta_integral}
\eeq

%WvdC

%WdvC twice

%sigma function results

\section{Error terms\label{sec:error}}
%c=0 (same as before)
The purpose of this section is to estimate $E(y)$ in 
\eqref{eq:errorterms}.
We begin with the case $c=0$. Then $d=\pm1$ by coprimality, and  \cite[eq.~(25)]{strombergsson_effective_2013} yields
\begin{align}\label{eq:total_c_0}
E_{c= 0}(y)
=
\int_{\R} \tilde f_n\left(\pm \begin{pmatrix}\sqrt y& x/\sqrt y\\0&1/\sqrt y\end{pmatrix}\right)\, \rho(x)dx
\ll
\|f\|_{C_{\mathrm{b}}^2}\frac{y}{n^2}
.
\end{align}
After summing over $n$, the contribution from this  term is clearly much smaller than that claimed in Theorem~\ref{th:main}. 

%change of variable

The remaining contribution ($c\ne 0$) to the error term  $E(y)$ in \eqref{eq:errorterms} is
\beq
E_{c\ne 0}(y)=\sum_{\substack{c\ne0 \\ n\ge1\\ (c,d) =1}}
 \int_{\R} e\left(n\left(d\xi_1(x) - c\xi_2(x)\right)\right)\tilde f_n\left(\begin{pmatrix}*&*\\c&d\end{pmatrix} \begin{pmatrix}\sqrt y&x/\sqrt y\\0&1/\sqrt y\end{pmatrix}\right)\rho(x)dx.
\eeq
Now we proceed to the change of variables,  following \cite[Lemma~6.1]{strombergsson_effective_2013}. Writing the argument of $\tilde f_n$ in Iwasawa coordinates \eqref{eq:iwasawa}, we get 
\begin{align}
\int_\R e\left(n\left(d\xi_1(x) - c\xi_2(x)\right)\right) \tilde f_n \left(\begin{pmatrix}a&b\\c&d\end{pmatrix} \begin{pmatrix}\sqrt y&x/\sqrt y\\0& 1/\sqrt y\end{pmatrix}\right) \rho(x)\, dx = \int_0^\pi g(\theta) d\theta,
\end{align}
for $c>0$, where
\begin{align}
g(\theta) & =  e\left(n\left( d\xi_1\left(-\frac dc+y\ctg \theta\right) -c\xi_2\left(-\frac dc+y\ctg \theta\right)\right)\right)\label{eq:exp_to_simplify}
\\
& \quad \times \tilde f_n\left(\frac{a}c -\frac{\sin 2\theta}{2c^2 y}, \frac{\sin^2\theta}{c^2y},\theta\right)
 \rho\left(-\frac dc + y\ctg\theta\right)
\frac{y}{\sin^2\theta}.\notag
\end{align}
We have the same  integral with limits $-\pi$ and 0 if $c<0$. Combining terms with positive and negative $c$ gives
\begin{align}
E_{c\ne 0}(y)=
\sum_{\substack{ c,n\ge 1}} \sum_{(c,d)=1} \int_{-\pi}^\pi  g(\theta)d\theta.
\end{align}
Let 
\beq
\tilde g(\theta) =  \rho\left(-\tfrac dc \right) \tilde f_n\left(\frac{\bar d}c -\frac{\sin 2\theta}{2c^2y},\frac{\sin^2 \theta}{c^2y},\theta\right) 
e\left ( - n c\Xi\left(-\tfrac dc\right)\right)
\frac{y}{\sin^2\theta},
\eeq
where $\Xi(z)=z\xi_1(z)+\xi_2(z)$. 
%approximation lemma

\begin{lemma}  
For every $\eps>0$, we have
\begin{equation}\sum_{c,n\ge 1} \sum_{\substack{(c,d)=1\\d \in \Z}} \int_{\theta=-\pi}^\pi  (g(\theta)-\tilde g(\theta))d\theta  \ll_\eps
\|f\|_{C^4_{\text b}} (1+\|\Xi\|_{C^1_{\text b}} + \|\xi_1\|_{L^\infty})\|\rho\|_{W^{1,1}}
y^{1/2 -\eps}
%  (\|\xi_1\|_{L^\infty} +\|\Xi\|_{\Lip})(1+\|\rho\|_{\Lip})
\label{eq:toanalyze}
\end{equation}
for $0<y<1$.
\end{lemma}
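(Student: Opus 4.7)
I decompose $g(\theta)-\tilde g(\theta)=\Delta_e(\theta)+\Delta_\rho(\theta)$, separating the change of phase from the change of $\rho$. Since $ad-bc=1$ forces $a\equiv\bar d\pmod c$ and $\tilde f_n$ is left-invariant under $\left(\begin{smallmatrix}1&\Z\\0&1\end{smallmatrix}\right)$, the $\tilde f_n$-factors in $g$ and $\tilde g$ agree pointwise; call their common value $F(\theta)$. Using $\xi_2=\Xi-z\xi_1$, the phase of $g$ rewrites as $-nc\Xi(-d/c+w)+ncw\,\xi_1(-d/c+w)$ with $w=y\ctg\theta$, so
\[
|e(P_1)-e(P_2)|\le 2\pi nc|w|\bigl(\|\Xi\|_{C^1_{\text b}}+\|\xi_1\|_{L^\infty}\bigr).
\]
Then $\Delta_e=(e(P_1)-e(P_2))\,F\,\rho(-d/c+w)\cdot y/\sin^2\theta$ and $\Delta_\rho=e(P_2)\,F\,\bigl(\rho(-d/c+w)-\rho(-d/c)\bigr)\cdot y/\sin^2\theta$.

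\textbf{Phase piece.} Using $|w|\le y/|\sin\theta|$ and the crude bound $\sum_{d\in\Z}|\rho(-d/c+w)|\le(c+1)\|\rho\|_{L^\infty}\ll c\|\rho\|_{W^{1,1}}$ (from $\|\rho\|_{L^\infty}\le\|\rho'\|_{L^1}$, valid for compactly supported $\rho$), the estimate \eqref{eq:fn_control} with $m=4$ combined with \eqref{eq:theta_integral} for $A=3$, $B=4$ gives
\[
\int_{-\pi}^{\pi}\sum_{(c,d)=1}|\Delta_e|\,d\theta\ll(\|\Xi\|_{C^1_{\text b}}+\|\xi_1\|_{L^\infty})\|\rho\|_{W^{1,1}}\|f\|_{C^4_{\text b}}\,nc^2y^2\min\{(nc\sqrt y)^{-4},(nc\sqrt y)^{-2}\}.
\]
Summing over $c,n\ge1$ by splitting at $nc\sqrt y=1$, direct term-by-term estimation (dissecting by the range of $c$ relative to $1/\sqrt y$) yields a total $\ll y^{1/2}$.

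\textbf{$\rho$ piece.} From $|\rho(-d/c+w)-\rho(-d/c)|\le\int_{|s|\le|w|}|\rho'(-d/c+s)|\,ds$ and a Fubini interchange followed by the substitution $x=-d/c+s$,
\[
\sum_{d\in\Z}|\rho(-d/c+w)-\rho(-d/c)|\ll(c|w|+1)\|\rho'\|_{L^1}\le(c|w|+1)\|\rho\|_{W^{1,1}},
\]
since for fixed $x$ the number of $d\in\Z$ with $|x+d/c|\le|w|$ is $\le 2c|w|+1$. The $c|w|$ term is absorbed into the phase piece estimate (without the factor $n$, hence smaller), while the $1$ term gives $\sum_{c,n}y\min\{(nc\sqrt y)^{-4},(nc\sqrt y)^{-1}\}$ via \eqref{eq:theta_integral} with $A=2$. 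In the region $nc\sqrt y>1$, the divisor bound $\sum_{nc>N}(nc)^{-4}\ll N^{-3+\eps}$ gives $\ll y^{1/2-\eps/2}$; in the region $nc\sqrt y\le1$ one finds $\sqrt y\sum_{nc\le N}(nc)^{-1}\ll\sqrt y\log^2(1/y)$, absorbed by $y^{-\eps}$.

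\textbf{Main obstacle.} The crucial trick is the Fubini estimate $\sum_d|\rho_1-\rho_2|\ll(c|w|+1)\|\rho\|_{W^{1,1}}$: a naive pointwise bound $|w|\|\rho'\|_{L^\infty}$ would require controlling $\rho$ in $W^{2,1}$, whereas the theorem asks only for $W^{1,1}$. The main technical obstacle is then verifying that both sub-pieces recover the power $y^{1/2}$ precisely, balancing the polynomial growth factors $nc^2y^2$ or $cy^2$ against the sharp decay $(nc\sqrt y)^{-m}$ coming from \eqref{eq:fn_control} in the region $nc\sqrt y>1$, and against $(nc\sqrt y)^{-A+1}$ in the region $nc\sqrt y\le 1$.
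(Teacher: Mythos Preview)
Your proof is correct and uses the same essential ingredients as the paper: the identification of the $\tilde f_n$-factors via $a\equiv\bar d\pmod c$, the phase rewriting through $\Xi$, the decay bound \eqref{eq:fn_control}, and the integral estimate \eqref{eq:theta_integral}. The organization differs. The paper splits further according to whether $ncy\lvert\ctg\theta\rvert<1$: in the small regime it Taylor-expands the exponential and, for the $\rho$-difference, exploits that the intervals $[-dh/c,-dh/c+y\ctg\theta]$ are pairwise disjoint (since $y\lvert\ctg\theta\rvert<1/(nc)\le h/c$) to turn $\sum_d\int$ into a single $\int\lvert\rho'\rvert$; in the large regime it bounds the bracket trivially and uses $m=3$. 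Your Fubini-counting estimate $\sum_d\lvert\rho(-d/c+w)-\rho(-d/c)\rvert\ll(c\lvert w\rvert+1)\|\rho'\|_{L^1}$ replaces this disjointness argument and works uniformly, so you avoid the four-way case analysis entirely. The price is that your phase piece carries the Lipschitz bound $\lvert e(P_1)-e(P_2)\rvert\ll nc\lvert w\rvert$ even where it exceeds the trivial bound $2$; as you verified, the extra factor $nc^2y^2$ is still beaten by $(nc\sqrt y)^{-4}$ from $m=4$, so no harm results. Your route is slightly more economical; the paper's case split is perhaps more transparent about where the Taylor expansion is legitimate.
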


\begin{proof}
 We write 
 \begin{align}
  g(\theta) - \tilde g(\theta) & = 
  \tilde f_n\left(\frac{\bar d}c -\frac{\sin 2\theta}{2c^2y},\frac{\sin^2 \theta}{c^2y},\theta\right) 
  \frac{y}{\sin^2\theta}
  \\
  \notag
  &\quad\times
  \big[e\left(n\left( d\xi_1\left(-\tfrac dc+y\ctg \theta\right) -c\xi_2\left(-\tfrac dc+y\ctg \theta\right)\right)\right) \rho\left(-\tfrac dc +y\ctg \theta\right)  \\
  &\quad\quad - e\left (  - n c\Xi\left(-\tfrac dc\right)\right) \rho\left(-\tfrac dc\right)
  \big]
  \notag
  \\
  &
  = 
  \tilde f_n\left(\frac{\bar d}c -\frac{\sin 2\theta}{2c^2y},\frac{\sin^2 \theta}{c^2y},\theta\right)
  \frac{y}{\sin^2\theta}e\left(- nc\Xi \left(-\tfrac dc\right)\right)
  \\
  &
  \quad\times
  \left[e\left(O((\|\Xi\|_{C^1_{\text b}} + \|\xi_1\|_{L^\infty}) ncy \lvert \ctg \theta \rvert\right) \rho\left(-\tfrac dc +y\ctg \theta\right) - \rho\left( -\tfrac dc \right)
  \right].\notag
 \end{align}
 When $ncy \lvert \ctg \theta \rvert < 1,$ we use Taylor expansion of the exponential; in the complementary case we bound it trivially. 
 The contribution of the first option comes in two parts since $e(z) = 1 + O(z)$. 
The first part is controlled by using \eqref{eq:fn_control} with $m=2$ and \eqref{eq:theta_integral} with $A = 2$ and $B=2$, together with elementary inequalities. 
We have the bound
\begin{align}
 \sum_{c,d,n} & \int_{\theta=-\pi}^\pi  \tilde f_n\left(\frac{\bar d}c -\frac{\sin 2\theta}{2c^2y},\frac{\sin^2 \theta}{c^2y},\theta\right)
  \frac{y\, d\theta}{\sin^2\theta}e\left(- nc\Xi \left(-\tfrac dc\right)\right)
  \\\notag
  &\quad\times
  \left[\rho\left(-\tfrac dc +y\ctg \theta\right) - \rho\left( -\tfrac dc \right)
  \right] \1_{ncy\lvert \ctg \theta\rvert <1 }\\
 &\ll \|f\|_{C^2_\text{b}} \int_{-\pi}^\pi\limits \sum_{n,c} \frac{y\, d\theta}{\sin^2 \theta} \min\left\{ 1, \left(\frac{\sin \theta}{nc\sqrt y} \right)^2\right\} 
 \\\notag
 &\quad \times
 \sum_{\mathclap{(c,d) = 1}} \left|\rho\left( -\tfrac dc\right) - \rho\left( - \tfrac dc + y\ctg\theta\right)\right| \1_{ncy\lvert \ctg \theta\rvert <1 }\\
 &\ll  \|f\|_{C^2_\text{b}} \int_{-\pi}^\pi\limits \sum_{n,c} \frac{y\, d\theta}{\sin^2 \theta} \min\left\{ 1, \left(\frac{\sin \theta}{nc\sqrt y} \right)^2\right\} 
 \\
 \notag
 &\quad \times
 \sum_{h\mid c} |\mu(h)| \sum_{d\in \Z} \int_{-\frac{dh}{c}}^{-\frac{dh}{c} + y\ctg \theta} |\rho'(t)|\, dt \, \1_{y\lvert \ctg \theta\rvert <\frac1{nc} }.
\end{align}
Now we use the condition $y\lvert \ctg \theta \rvert <\frac1{nc}$ to recast the sum in $d$ and the integral in $t$ to a single integral over the real line:
\begin{align}
 &
 \ll 
  \|f\|_{C^2_\text{b}} \int_{-\pi}^\pi \sum_{n,c} \frac{y\, d\theta}{\sin^2 \theta} \min\left\{ 1, \left(\frac{\sin \theta}{nc\sqrt y} \right)^2\right\} \sum_{h\mid c} |\mu(h)| \int_{-\infty}^\infty |\rho'(t) | \, dt\\
  &
  \ll_\eps  \|f\|_{C^2_\text{b}} \|\rho\|_{W^{1,1}} \int_{-\pi}^\pi \sum_{n,c} \frac{y\, d\theta}{\sin^2 \theta} \min\left\{ 1, \left(\frac{\sin \theta}{nc\sqrt y} \right)^2\right\} c^\eps 
  \end{align}
At this step we take advantage of \eqref{eq:theta_integral} with $A = 2 = B$, giving the bound
\begin{align}
  &\ll  \|f\|_{C^2_\text{b}} \|\rho\|_{W^{1,1}} \sum_{n,c} yc^\eps \min\left\{(nc\sqrt y)^{-2}, (nc\sqrt y)^{-1}\right\}
  \\
  &
  \ll  \|f\|_{C^2_\text{b}} \|\rho\|_{W^{1,1}} \sum_{k=1}^\infty y k^\eps \min\left\{(k\sqrt y)^{-2}, (k\sqrt y)^{-1}\right\}
  \\
  & \ll \|f\|_{C^2_\text{b}} \|\rho\|_{W^{1,1}} y^{1/2 -\eps}.
\end{align}
% We don't distinguish between $\eps$ and $\eps/2$ in the error term.
The second part is controlled by using \eqref{eq:fn_control} with $m=4$ followed by \eqref{eq:theta_integral} with $A = 3$ and $B=4$, together with elementary inequalities. 
The bound in this case is 
\begin{align}
 \sum_{c,d,n} &\int_{\theta=-\pi}^\pi\tilde f_n \left(\frac{\bar d}c -\frac{\sin 2\theta}{2c^2y},\frac{\sin^2 \theta}{c^2y},\theta\right)
  \frac{y\, d\theta}{\sin^2\theta}e\left(- nc\Xi \left(-\tfrac dc\right)\right)
  \\\notag
  &\quad \times
  \left[ O((\|\Xi\|_{C^1_{\text b}} + \|\xi_1\|_{L^\infty}) ncy \lvert \ctg \theta \rvert   \rho\left(-\tfrac dc +y\ctg \theta\right) 
  \right] \1_{ncy\lvert \ctg \theta\rvert <1 }\\
&\ll \|f\|_{C^4_\text{b}} (\|\Xi\|_{C^1_{\text b}} + \|\xi_1\|_{L^\infty}) \int_{-\pi}^\pi \frac{y\, d\theta}{\sin^2\theta} \sum_{n,c} \min\left\{ 1, \left(\frac{\sin \theta}{nc\sqrt y} \right)^4\right\} ncy \lvert \ctg\theta \rvert 
\\
&
\quad
\times\sum_{h\mid c} |\mu(h)| \sum_{d\in\Z} \rho\left(-\frac{dh}c + y\ctg \theta\right)\1_{ncy\lvert \ctg \theta\rvert <1 }\notag
\\
&\ll \|f\|_{C^4_\text{b}} (\|\Xi\|_{C^1_{\text b}} + \|\xi_1\|_{L^\infty}) \int_{-\pi}^\pi \frac{ncy^2\, d\theta}{\lvert \sin \theta \rvert^3} \sum_{n,c} \min\left\{ 1, \left(\frac{\sin \theta}{nc\sqrt y} \right)^4\right\}  
\\
&
\quad
\times \sum_{h\mid c} |\mu(h)| \left( \frac ch \int_{-\infty}^\infty \rho(t) \, dt + \int_{-\infty}^\infty |\rho'(t)|\,dt \right).\notag
\end{align}
Now the last line is at most a constant times $c^{1+\eps}\|\rho\|_{W^{1,1}}$, since $\rho$ is of integral $1$ and is supported on an interval of length $1$. We get
\begin{align}
 &\ll \|f\|_{C^4_\text{b}} (\|\Xi\|_{C^1_{\text b}} + \|\xi_1\|_{L^\infty})\|\rho\|_{W^{1,1}}
 \sum_{n,c}y^2 n c^{2+\eps} \min\{(nc\sqrt y)^{-2}, (nc\sqrt y)^{-4}\}\\
 &\ll_\eps  \|f\|_{C^4_\text{b}} (\|\Xi\|_{C^1_{\text b}} + \|\xi_1\|_{L^\infty})\|\rho\|_{W^{1,1}}
 \sum_{k=1}^\infty y^2 k^{2+\eps}\min\{(k\sqrt y)^{-2}, (k\sqrt y)^{-4}\}\\
 &\ll_\eps  \|f\|_{C^4_\text{b}} (\|\Xi\|_{C^1_{\text b}} + \|\xi_1\|_{L^\infty})\|\rho\|_{W^{1,1}} y^{1/2 - \eps}.
\end{align}

Now we peruse the second option, $ncy\lvert \ctg \theta \rvert \ge 1$. Again, we distinguish two subcases, $1\ge ncy \ge \lvert \tg \theta\rvert$ and $1 <ncy \ge \lvert \tg \theta\rvert.$ The first subcase is dealt with using \eqref{eq:fn_control} with $m=3$ followed by elementary estimates for the  integral in $\theta$:
\begin{align}
 \sum_{c,d,n} & \int_{\theta=-\pi}^\pi  \left\lvert\tilde f_n\left(\frac{\bar d}c -\frac{\sin 2\theta}{2c^2y},\frac{\sin^2 \theta}{c^2y},\theta\right)\right\rvert
  \frac{y\, d\theta}{\sin^2\theta}
  \\\notag
  &
  \quad \times
  \left[\rho\left(-\tfrac dc +y\ctg \theta\right) + \rho\left( -\tfrac dc \right)
  \right] \1_{1\ge ncy \ge \lvert \tg \theta\rvert}\\
 &\ll
 \|f\|_{C^3_{\text b}} \int_{-\pi}^\pi\limits \sum_{n,c} \frac{y\, d\theta}{\sin^2 \theta} \min\left\{1 , \left(\frac{\lvert \sin \theta\rvert }{nc\sqrt y}\right)^3\right\}
 \\\notag
 &
 \quad \times
 \sum_{\mathclap{(c,d)=1}} \left( \rho\left( - \tfrac dc + y\ctg \theta\right) + \rho\left( -\tfrac dc \right) \right)
 \1_{1\ge ncy \ge \lvert \tg \theta\rvert}\\
 &\ll
 \|f\|_{C^3_{\text b}} \int_{-\pi}^\pi\limits \sum_{n,c} \frac{y\, d\theta}{\sin^2 \theta} \min\left\{1 , \left(\frac{\lvert \sin \theta \rvert}{nc\sqrt y}\right)^3\right\} c^{1+\eps}\|\rho\|_{W^{1,1}} \1_{1\ge ncy \ge \lvert \tg \theta\rvert}
 \end{align}
 Using the same reasoning as before convert the sum over $d$ into an integral, we arrive at the bound
 \begin{align}
 &
 \ll 
 \|f\|_{C^3_{\text b}}\|\rho\|_{W^{1,1}}\sum_{n,c} yc^{1+\eps}\int_{0\le \theta \ll ncy} \frac{\theta \, d\theta}{n^3c^3 y^{3/2}} \1_{1\ge ncy}
 \\
 &
 \ll
 \|f\|_{C^3_{\text b}}\|\rho\|_{W^{1,1}} \sum_{n,c} y^{-1/2} n^{-1} c^{\eps} y^2\1_{1\ge ncy}\\
 &
 \ll 
 \|f\|_{C^3_{\text b}}\|\rho\|_{W^{1,1}} y^{1/2 -\eps}.
\end{align}
In the second subcase, we only keep the condition $ncy > 1$ to get the bound
\begin{align}
\sum_{c,d,n} & \int_{\theta=-\pi}^\pi\limits  \left\lvert\tilde f_n\left(\frac{\bar d}c -\frac{\sin 2\theta}{2c^2y},\frac{\sin^2 \theta}{c^2y},\theta\right)\right\rvert
  \frac{y\, d\theta}{\sin^2\theta}
  \left[\rho\left(-\tfrac dc +y\ctg \theta\right) + \rho\left( -\tfrac dc \right)
  \right] \1_{1< ncy }\\
 &\ll \|f\|_{C^3_{\text b}} \|\rho\|_{W^{1,1}}\sum_{n,c} \int_{-\pi}^\pi \frac{y \, d\theta}{\sin^2\theta} \min\left\{1 , \left(\frac{\lvert \sin \theta\rvert }{nc\sqrt y}\right)^3\right\} c^{1+\eps} \1_{1< ncy }\\
 &
 \ll 
  \|f\|_{C^3_{\text b}}\|\rho\|_{W^{1,1}} \sum_{n,c} \int_{-\pi}^\pi 
  \frac{\lvert \sin\theta \rvert y c^{1+\eps}d\theta}{(nc\sqrt y)^3}\1_{1< ncy }\\
  &
  \ll
  \|f\|_{C^3_{\text b}} \|\rho\|_{W^{1,1}} y^{-1/2}\sum_{k>1/y} \frac{k^{1+\eps}}{k^3}
  \\
  &
  \ll
  \|f\|_{C^3_{\text b}}\|\rho\|_{W^{1,1}} y^{1/2-\eps}.
\end{align}

\end{proof}

\bigskip

%calculation

% check c=0

We need to analyze 
\begin{align}
 \tilde E_{c\ne 0}(y) = \sum_{c,n\ge 1} \int_{\theta=-\pi}^\pi\limits \sum_{\substack{(c,d) = 1\\ d\in\Z}}
 \tilde f_n\left(\frac{\bar d}c -\frac{\sin 2\theta}{2c^2y},\frac{\sin^2 \theta}{c^2y},\theta\right)
 e\left( - nc\Xi\left(-\frac dc\right)\right) \rho\left(-\frac dc\right)\frac{y \,d\theta}{\sin^2\theta}.
\end{align}

Define Fourier coefficients
\begin{align}
\label{eq:bdef} b_l^{(n,c)}(\theta)& = \int_{0}^1 \tilde f_n\left (u, \frac{\sin^2\theta}{c^2y},\theta\right) e(-lu) \, du,\\
\label{eq:adef} a_k^{(n,c)} & =  \int_0^1 \rho(u) e(-ku) du.
\end{align}
Then, we can use the fact that $\rho$ is supported within $(-1, 0]$ to write 
\begin{align}
 \tilde E_{c\ne 0}(y) & = \sum_{\substack{c,n\ge 1\\ k,l\in\Z}} \int_{\theta=-\pi}^\pi\limits \sum_{\substack{(c,d) = 1\\ 0\le d < c}}
 b_l^{n,c}(\theta)
 a_k^{(n,c)}
 e\left(\frac{l\bar d}c -\frac{l\sin 2\theta}{2c^2y}\right)
 \\
 &
 \notag
 \quad \times
 e\left(-nc\Xi\left(-\frac dc\right)-\frac{kd}c\right)
 \frac{y \,d\theta}{\sin^2\theta}\\
 &\le
 \sum_{\substack{c,n\ge 1\\ k,l\in\Z}} \int_{\theta=-\pi}^\pi 
 |b_l^{n,c}(\theta)
 a_k^{(n,c)}|
 \left\lvert\sum_{\substack{(c,d) = 1\\ 0\le d < c}}
 e\left(\frac{l\bar d - kd}c -
 nc\Xi\left(-\frac dc\right)
 \right)
 \right\rvert
 \frac{y \,d\theta}{\sin^2\theta}. 
\end{align}
Our objective is to get savings for the sum over $d$ and use the bounds 
\begin{align}\label{eq:fourierbound} b_l^{(n,c)}(\theta)\ll \begin{cases}\|f\|_{C_{\mathrm{b}}^m}\min\left\{1, \left(\dfrac{\lvert\sin\theta\rvert}{nc\sqrt y}\right)^m\right\}& \text{for any }m\ge 0,\\
                               l^{-2}\|f\|_{C_{\mathrm{b}}^{m+2}} n^{-4}\min\left\{1, \left(\dfrac{\lvert \sin\theta\rvert}{nc\sqrt y}\right)^{m-4}\right\}& \text{for any }m\ge 4.
                              \end{cases}
\end{align}
and 
\begin{align}
\label{eq:measure_fourier}
a_k\ll_\eta (1+|k|)^{-1-\eta} \|\rho\|_{W^{1,1}}^{1-\eta} \|\rho\|_{W^{2,1}}^\eta,  \text{ for } \eta\in(0,1).
\end{align}
The bound \eqref{eq:fourierbound} is taken from \cite[Lemma~4.1]{browning_vinogradov_2016}, while the bound \eqref{eq:measure_fourier} follows from \eqref{eq:adef} and integration by parts. 
We use the first bound with $m=2$ and $m=6$, and note that 
\begin{align}\label{eq:strominequality}
\int_{-\pi}^\pi \min\left\{1, \left(\frac{\lvert\sin\theta\rvert}a\right)^2\right\} \frac{d\theta}{\sin^2\theta}
\ll  \frac{1}{a(1+a)},
\end{align}
for $a>0$, following \eqref{eq:fn_control}. This inequality will be applied  with $a = nc\sqrt y$. 

We write 
\begin{align}
  S_c(k,l,n) = S = \sum_{\substack{(c,d) = 1\\ 0\le d < c}}
 e\left(\frac{l\bar d - kd}c -
 nc\Xi\left(-\frac dc\right)
 \right).
\end{align}
Cancellations in the exponential sum are proved in Section \ref{sec:number}, where Proposition \ref{prop:cancellations} is established, 
distinguishing two cases, when $l=0$ and when $l\ne 0$. 
Combining contributions of these two cases, we control $\tilde E_{c\ne 0}(y)$ by 
\begin{align}
\tilde E_{c\ne 0}(y) &\ll C \sum_{\substack{c,n\ge 1\\ k\in\Z}} |a_k| \int_{-\pi}^\pi |b_0(\theta)|\frac{y \, d\theta}{\sin^2 \theta} n^{1/2} c^{1-1/D+\eps}  \\ \notag
 &
 \quad + (1+C_2^{1/2}) \sum_{\substack{c, n\ge 1\\ k\in\Z\\ l\ne 0}} |a_k| \int_{-\pi}^\pi |b_l(\theta)|\frac{y \, d\theta}{\sin^2 \theta}  
%  n^{1/4} c^{3/4}u^{1/8} v^{3/16} 3^{\omega(c)/2}l^{1/8} \sigma^{1/2}_{-1/2}(c)
 c^{\frac58+\eps} u^{\frac14} v^{\frac13} n^{\frac12} (u, l)^{\frac14}(v, l)^{\frac16}
 \\
 & = E_{l=0}(y) + E_{l\ne0}(y).
\end{align}
For $E_{l=0}(y)$ we use the first bound from \eqref{eq:fourierbound} with $m=2$, and \eqref{eq:measure_fourier} followed by \eqref{eq:strominequality}. After bringing out factors of $F =  C\|f\|_{C^2_{\text b}}  \|\rho\|_{W^{1,1}}^{1-\eta} \|\rho\|_{W^{2,1}}^\eta$, we get 
\begin{align}
 E_{l = 0} &
 \ll_{\eta,\eps} F \sum_{c,n,k} (1+|k|)^{-1-\eta} \frac{y}{nc\sqrt y(1+nc\sqrt y)} n^{1/2} c^{1-1/D+\eps} \\
 &
 \ll F \sum_{c, n} \frac{\sqrt y c^{\eps}}{\sqrt n c^{1/D}(1+cn\sqrt y)} \\
 &
 \ll F
 \sum_{n=1}^\infty \sqrt{\frac yn} \left[ \sum_{c\ge \frac{1}{n\sqrt y}} \frac{\1_{n\sqrt y \le  1}}{c^{1+\frac 1D-\eps} n\sqrt y}  + \sum_{c\ge 1} \frac{\1_{n\sqrt y \ge 1}}{c^{1+\frac 1D-\eps} n\sqrt y}  + \sum_{\mathclap{c\le \frac1{n\sqrt y}}} \frac{\1_{n\sqrt y \le 1}}{ c^{\frac1D-\eps}} \right].
\end{align}
% Now, we have $\sum_{n\le x} K^{\omega(n)} \ll x^\eps$
% for any $K>1$ and $x\geq 1$ (see \cite[Thm.~II.6.1]{tenenbaum_introduction_1995}, for example). Summation by parts gives, for $\gamma \in (0,1)$,
% \begin{align}
%  \sum_{c \ge x} \frac{K^{\omega(c)}}{c^{1+\gamma}} & \ll \frac{\log^{K-1}(2+x)}{x^\gamma},&
%  \sum_{c \le x} \frac{K^{\omega(c)}}{c^\gamma} & \ll x^{1-\gamma} \log^{K-1}(2+x).
% \end{align}
% Applying these inequalities with $\gamma = 1/D$ gives 
Bounding sums over $c$ gives
\begin{align}
 E_{l = 0} &\ll F \bigg[ \sum_{n=1}^\infty \sqrt{\frac yn} \frac{(n\sqrt y)^{1/D-\eps}}{n\sqrt y} \1_{n\sqrt y \le  1}  
 + \sum_{n=1}^\infty \sqrt {\frac yn} \frac1{n\sqrt y}\1_{n\sqrt y \ge 1}
 \\ \notag
 &\quad + \sum_{n=1}^\infty \sqrt{\frac yn} \left(\frac1{n\sqrt y}\right)^{1-\frac 1D +\eps} \1_{n\sqrt y \le 1}  \bigg]
 \\
 &
 \ll F \bigg[
 \sum_{n\le \frac1{\sqrt y}} n^{-\frac32 + \frac 1D -\eps} y^{\frac1{2D} -\eps} + \sum_{n \ge \frac1{\sqrt y}} n^{-3/2} 
%  \\
%  &\quad \notag
 + \sum_{n\le \frac1{\sqrt y}} \frac{y^{\frac1{2D}-\eps}}{n^{\frac32 - \frac1D+\eps}}  \bigg]
 \\
 &
 \ll F
  y^{\frac{1}{2D}-\eps}.
  \label{eq:total_l_0}
\end{align}

When $l\ne 0$, we use \eqref{eq:measure_fourier}, \eqref{eq:fourierbound} with $m=6,$ and \eqref{eq:strominequality}. Abbreviating $H = \|f\|_{C^8_{\text b}}  \allowbreak \|\rho\|_{W^{1,1}}^{1-\eta}  \|\rho\|_{W^{2,1}}^\eta  (1+C_2^{1/2})$, we have
\begin{align}
 E_{l\ne 0} & \ll H \sum_{c,l,n} \frac{l^{-2} n^{-4} y}{nc\sqrt y(1 + nc\sqrt y)} (n^{1/2} c^{5/8+\eps} u^{1/4} v^{1/3} (uv, l)^{1/4})\\
 &\ll
 H \sum_{c,n} \frac{ n^{-4} y}{nc\sqrt y(1 + nc\sqrt y)} (n^{1/2} c^{5/8+\eps} u^{1/4} v^{1/3} )
\end{align}
Now we divide the sum over $c = uv$ into dyadic intervals $[2^{j-1}, 2^j)$, $j\in \N$. This gives 
\begin{align}
 E_{l\ne 0}& \ll H \sum_{n,j} n^{-5} y^{1/2} \sum_{\substack{ v \le 2^j\\ \text{sq.-full}}} \sum_{\substack{2^{j-1}/v \le u\le 2^j/v \\ \text{sq.-free}}} \frac{u^{-1/8+\eps} v^{-1/24+\eps}}{1+nc\sqrt y}\\
 &
 \ll
 H\sum_{n,j} n^{-5} y^{1/2} \sum_{\substack{ v \le 2^j\\ \text{sq.-full}}} \frac{v^{-1/24+\eps}}{1+n2^{j-1}\sqrt y} \sum_{u\le 2^j/v} u^{-1/8 +\eps}\\
 &
 \ll H\sum_{n,j} \frac{n^{-5} y^{1/2} 2^{7j/8+\eps}}{1+n2^{j-1}\sqrt y} \sum_{\substack{ v \le 2^j\\ \text{sq.-full}}} {v^{-1/24+\eps} v^{-7/8+\eps}}.
\end{align}
The sum over $v$ is convergent as square-full numbers have square-root density. The remaining sum gives the bound \begin{align}H y^{1/16-\eps} ,\label{eq:total_l_not_0}\end{align} as needed.

The total error contribution is
\begin{align}
 H y^{1/16-\eps}  + 
 F y^{\frac{1}{2D}-\eps}  + 
 \|f\|_{C^4_{\text b}} y^{1/2-\eps}
 (\|\xi_1\|_{L^\infty} +\|\Xi\|_{C^1_{\text b}}+1)\|\rho\|_{W^{1,1}},
\end{align}
coming from \eqref{eq:total_l_not_0}, \eqref{eq:total_l_0}, and \eqref{eq:toanalyze}, which is majorized by the expression in the statement of the theorem.

\bibliographystyle{plain}
\bibliography{bibliography}

\end{document}